\newtheorem{thm}{Theorem}[section]
\newtheorem{lem}[thm]{Lemma}
\newtheorem{prop}[thm]{Proposition}
\theoremstyle{definition}
\newtheorem{defn}[thm]{Definition}
\theoremstyle{remark}
\newtheorem{rem}[thm]{Remark}
\numberwithin{equation}{section}
\newcommand{\set}[1]{\left\{#1\right\}}
\newcommand{\Real}[0]{\mathbb R}
\newcommand{\Compl}[0]{\mathbb C}
\newcommand{\eps}[0]{\varepsilon}
\newcommand{\re}{\,\mathfrak{Re}\,}
\newcommand{\im}{\,\mathfrak{Im}\,}
\newcommand{\meas}{\,\text{meas}\,}
\newcommand{\tr}{\,\text{tr}\,}
\newcommand{\ii}{\mathrm{i}}
\newcommand{\sech}{\mathrm{sech}}
\newcommand{\co}{\text{const. }}
\def\res{\mathop{Res}}
\newcommand\sol[2]{\text{sol}^{#1}_{#2}(x,t)}
\begin{document}
\begin{abstract}
    In this article we consider the Cauchy problem for the cubic focusing nonlinear Schr\"{o}\-dinger (NLS) equation on the line with initial datum close to a particular $N$-soliton. Using inverse scattering and the $\overline{\partial}$ method we establish the decay of the $L^{\infty}(\Real)$ norm of the residual term in time.
\end{abstract}
\title{Asymptotic stability of $N$-solitons in the cubic NLS equation}
\address{Mathematisches Institut\\
  Universit\"{a}t zu K\"{o}ln\\
  50931 K\"{o}ln, Germany}
\email[A.~Saalmann]{asaalman@math.uni-koeln.de}
\author{Aaron Saalmann}
\date{\today}
\maketitle

\section{Introduction}\label{sec: intro}
We study the cubic focusing nonlinear Schr\"odinger (NLS) equation
\begin{equation}\label{equ: nls}
    \ii u_t+u_{xx}+2|u|^2u=0,
\end{equation}
on $\Real$ where $u(x,t):\Real\times\Real\to\Compl$. The initial value problem for (\ref{equ: nls}) is globally well posed in $L^2(\Real)$ due to the results of Tsutsumi \cite{Tsutsumi87}.
The \emph{linear} Schr\"{o}dinger equation $\ii q_t+q_{xx}=0$ is dispersive. Here, dispersion means that any solution $q$ of the linear Schr\"{o}dinger equation has the property $\|q\|_{L^{\infty}_x}\sim t^{-1/2}$ as $t\to\infty$. Once nonlinear effects are included \emph{soliton} solutions appear. Instead of dispersion, in the NLS equation we have that any solution decomposes into a solitary wave and a dispersive part as $t\to\infty$. \medskip \\ In this paper we will prove the following completion of Theorem 1.3 in \cite{Cuccagna2014}:
\begin{thm}\label{thm: mainthm}
    Fix $s\in (1/2,1]$, pairwise distinct poles $z_1,...,z_N\in\Compl^+$ and coupling constants $c_1,...,c_N\in\Compl\setminus\set{0}$ and denote the $N$-soliton with exactly these parameters by $u^{(sol)}$. Then we can find $\eps_0,T,C>0$ and solitons $u^{(sol)}_{\pm}$ with parameters $(z'_1,...,z'_N;c^{\pm}_1,..,c^{\pm}_N)$ with the following properties: for any $u_0\in L^{2,s}(\Real)\cap\mathcal{G}$ such that
    \begin{equation}\label{equ: u minus sol}
        \epsilon:=\|u_0(\cdot)-u^{(sol)}(\cdot,t=0)\|_{L^{2,s}(\Real)}<\eps_0,
    \end{equation}
    the solution of the initial value problem $u(\cdot,0)=u_0$ for (\ref{equ: nls}) satisfies
    \begin{equation}\label{equ: stability of n solitons}
        \left\| u(\cdot,t)-u^{(sol)}_{\pm}(\cdot,t)
            \right\|_{L^{\infty}(\Real)}<C\epsilon |t|^{-\frac12}
    \end{equation}
    for all $\pm t\geq T$. Additionally we have
    \begin{equation}\label{equ: z - hat z}
        |z_j-z'_j|+|c_j-c^{\pm}_j|<C\epsilon
    \end{equation}
    for all $1\leq j \leq N$.
\end{thm}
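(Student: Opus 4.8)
The plan is to prove the theorem through the inverse scattering transform, which linearizes the flow \eqref{equ: nls} via the Zakharov--Shabat spectral problem. First I would apply the direct scattering map to $u_0$: to a datum in $L^{2,s}(\Real)\cap\mathcal{G}$ one associates scattering data consisting of a reflection coefficient $r$, a finite set of discrete eigenvalues in $\Compl^+$, and the corresponding norming (coupling) constants. The unperturbed $N$-soliton $u^{(sol)}(\cdot,0)$ is \emph{reflectionless}, so its data is $(r\equiv0;\,z_1,\dots,z_N;\,c_1,\dots,c_N)$, and the genericity hypothesis $u_0\in\mathcal{G}$ ensures that the discrete spectrum of the perturbed datum again consists of exactly $N$ simple eigenvalues with no spectral singularity at the edge. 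Exploiting the Lipschitz continuity of the direct scattering map on $L^{2,s}$ together with \eqref{equ: u minus sol}, I would then show that the perturbed data is $O(\epsilon)$-close to that of $u^{(sol)}$: the reflection coefficient obeys $\|r\|\lesssim\epsilon$ in the weighted Sobolev norm dictated by $s\in(1/2,1]$, the perturbed eigenvalues satisfy $|z_j-z'_j|<C\epsilon$, and the norming constants are likewise $O(\epsilon)$-close. Since the eigenvalues are invariant under the flow, this already furnishes the pole estimate in \eqref{equ: z - hat z}.

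Next I would use the explicit time evolution of the scattering data (the reflection coefficient and norming constants acquire oscillatory phase factors while the eigenvalues remain fixed) and reconstruct $u(x,t)$ as the solution of the associated Riemann--Hilbert problem: a jump across $\Real$ prescribed by $r(z,t)$ together with residue conditions at the poles $z'_j$. Introducing the phase $\theta(z;x,t)$ with its real stationary point $z_0=z_0(x/t)$, I would carry out the Deift--Zhou nonlinear steepest descent analysis supplemented by the $\dbar$ method. The steps are: factorize the jump across $\Real$, open lenses and replace the oscillatory jump by a $\dbar$-extension of $r$ (which has only the regularity permitted by $s$), conjugate by the scalar function encoding the soliton--radiation interaction, and decompose the problem into an outer model carrying the discrete data, a local parametrix near $z_0$ built from parabolic-cylinder functions, and a residual pure-$\dbar$ problem solved by a Neumann series with small-norm estimate uniform in $t$.

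The outer model problem is reflectionless with poles $z'_j$ and norming constants shifted by the radiative conjugation factor; its solution is therefore exactly an $N$-soliton, namely $u^{(sol)}_{\pm}$ with parameters $(z'_1,\dots,z'_N;c^{\pm}_1,\dots,c^{\pm}_N)$. The $\pm$ discrepancy in the norming constants reflects that the radiation modifies them differently as $t\to+\infty$ versus $t\to-\infty$; since this shift is governed by $\|r\|\lesssim\epsilon$, one obtains $|c_j-c^{\pm}_j|<C\epsilon$, completing \eqref{equ: z - hat z}. The remaining contributions produce the residual $u-u^{(sol)}_{\pm}$: the local parametrix yields the leading dispersive term, of size $O(\epsilon\,|t|^{-1/2})$ because its amplitude scales linearly in $r(z_0)=O(\epsilon)$ and matches the linear dispersion rate, while the $\dbar$ problem contributes a strictly faster-decaying error. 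Assembling these by the triangle inequality gives \eqref{equ: stability of n solitons}.

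The main obstacle I anticipate is twofold. First, one must track the $\epsilon$-dependence \emph{uniformly in time}: every estimate in the steepest descent and the $\dbar$ analysis has to be shown linear in $\epsilon$, through the smallness of $r$, so that the final bound reads $C\epsilon\,|t|^{-1/2}$ rather than merely $C|t|^{-1/2}$. Second, the low regularity $s\in(1/2,1]$ forces the $\dbar$-extension of $r$ to be only H\"older-type, so the $\dbar$ estimates must be carried out with minimal smoothness while still delivering the sharp $|t|^{-1/2}$ rate and correctly handling the several poles of $\Compl^+$, which travel at distinct group velocities fixed by $\re z_j$ and partition the $(x,t)$-plane into spatial cones with nontrivial interaction zones.
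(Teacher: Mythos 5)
Your proposal follows essentially the same route as the paper: direct scattering with Lipschitz estimates for the $O(\epsilon)$-closeness of the data, reconstruction via the Riemann--Hilbert problem, Deift--Zhou steepest descent combined with the $\dbar$ method (lens opening with $\dbar$-extensions of $r$, conjugation by the scalar soliton--radiation factor, a parabolic-cylinder local parametrix of size $O(\epsilon t^{-1/2})$, and a small-norm $\dbar$ remainder), with the outer reflectionless model identified as the $N$-soliton whose norming constants are shifted by the radiative factors $\Lambda_j^{\pm}$. This is the same decomposition and the same key estimates as in the paper's chain $m\to m^{(1)}\to\cdots\to m^{(7)}$, so no further comparison is needed.
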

In the proof of Theorem \ref{thm: mainthm} we will compute the parameters of $u^{(sol)}_{\pm}$ explicitly and it will turn out that the $z'_j$ are given by the poles of $u_0$. The coupling constants $c^{\pm}_j$ can also be derived from the scattering data of $u_0$ via $c^{\pm}_j=c'_j(\Lambda_j^{\pm})^2$, where
\begin{equation}\label{equ: def Lambda}
   \Lambda_{j}^{\pm}:=\exp\left(\mp\frac{1}{2\pi\ii} \int_{\mp\infty}^{\re(z'_{j})} \frac{\log(1+|r(\varsigma))|^2} {\varsigma-z'_{j}}d\varsigma
   \right).
\end{equation}
In \cite{Pelinovsky2014} Contreras and Pelinovsky establish the orbital stability of $N$-solitons in the $L^2(\Real)$ space under the assumption (\ref{equ: u minus sol}). As mentioned by these authors they believed that also the (stronger) result (\ref{equ: stability of n solitons}) holds. For the proof we consider the Riemann Hilbert problem associated to the NLS equation and its solution $m$. Then motivated by the paper of  Cuccagna and Jenkins \cite{Jenkins2014} we define modifications $m\to m^{(1)}\to m^{(2)}\to m^{(3)}\to m^{(4)}\to m^{(5)}\to m^{(6)}\to m^{(7)}$ such that in the end $m^{(7)}$ is either trivial or corresponds to the $1$-soliton or to a breather solution. These modifications contain the Parabolic Cylinder RHP and the $\overline{\partial}$-method but not the dressing transformation like in \cite{Pelinovsky2014}.\medskip \\
The paper is organized as follows: Section \ref{sec: IST} gives some information about the direct in inverse scattering transform. Sections \ref{sec: poles} - \ref{sec: last step} are devoted to the chain of manipulations $m\to ...\to m^{(7)}$. Finally in Section \ref{sec: proof} the results will be collected in order to prove Theorem \ref{thm: mainthm}.\\
The results in \cite{Borghese2016} (and also the proofs) are basically the same but developed independently of each other.
\par \textbf{Acknowlegments.} I  wish to thank Prof. Scipio Cuccagna and Prof. Markus Kunze for useful discussions.

\section{The Inverse Scattering Transform and Soliton Solutions}\label{sec: IST}
A key ingredient for many results on stability of solitary waves comes from the methods of inverse scattering. The following theorem summarizes the theory:
\begin{thm}\label{thm: scattering}
    Let $s\in(1/2,1]$. There exist open sets $\mathcal{G}_N\subset L^1(\Real)$ ($N\in\mathbb{N}\cup\set{0}$) and transformations
    \begin{equation*}
        \mathcal{S}_N:\qquad
        \begin{aligned}
            L^{2,s}(\Real)\cap\mathcal{G}_N&\to &H^s(\Real)\times \Compl_+^N\times \Compl_{\ast}^N\\
            u_0 &\mapsto& (r(z);z_1,...,z_N;c_1,...,c_N)
        \end{aligned}
    \end{equation*}
    such that:
    \begin{enumerate}[(i)]
      \item $\mathcal{G}:=\bigcup_{N\in\mathbb{N}\cup\set{0}} \mathcal{G}_N$ is dense in $L^1$;
      \item The maps $\mathcal{S}_N$ are locally Lipschitz and one-to-one;
      \item The solution of (\ref{equ: nls}) with $u(x,0)=u_0(x)$ and $u_0\in H^1(\Real)\cap L^{2,s}(\Real)\cap\mathcal{G}_N$ can be obtained by the following three steps:
          \begin{description}
            \item[1. Step] Calculate the scattering data associated with $u_0$, i.e. $(r(z);z_1,...,z_N;c_1,...,c_N):= \mathcal{S}_N(u_0)$.
            \item[2. Step]Solve the following Riemann Hilbert problem:
\begin{samepage}
\begin{framed}
\textbf{RHP[NLS]:}\\Find for each $(x,t)\in\Real\times\Real$ a $2\times 2$-matrix valued function $\Compl\ni z\mapsto m(z;x,t)$ which satisfies
\begin{enumerate}[(i)]
  \item $m(z;x,t)$ is meromorphic in $\Compl\setminus\Real$ (with respect to the parameter $z$).
  \item $m(z;x,t)=1+\mathcal{O}\left(\frac{1}{z}\right)$ as $|z|\to\infty$.
  \item The non-tangential boundary values $m_{\pm}(z;x,t)$ exist for $z\in\Real$ and satisfy the jump relation $m_+=m_-V(r)$, where
      \begin{equation}\label{equ: def V(r)}
        V(z;x,t):=
        \left(
          \begin{array}{cc}
            1+|r(z)|^2 & e^{\overline{\phi}(z)}\overline{r}(z) \\
            e^{\phi(z)}r(z) & 1 \\
          \end{array}
        \right)
      \end{equation}
      with
      \begin{equation}\label{equ: phase}
         \phi(z):=2\ii xz+4\ii z^2t.
      \end{equation}
  \item $m$ has simple poles at $z_1,...,z_N,\overline{z}_1,...,\overline{z}_N$ with
      \begin{equation}\label{equ: Res orig 2}
        \begin{aligned}
          \res_{z=z_k}m(z;x,t)&=\lim_{z\to z_k}m(z;x,t)
          \left(
            \begin{array}{cc}
              0 & 0 \\
              c_k e^{\phi_k} & 0
            \end{array}
          \right),\\
          \res_{z=\overline{z_k}}m(z;x,t)&=\lim_{z\to \overline{z}_k}m(z;x,t)
          \left(
            \begin{array}{cc}
              0 & -\overline{c}_k e^{\overline{\phi}_k} \\
              0 & 0
            \end{array}
          \right).
        \end{aligned}
      \end{equation}
      Here we set
      \begin{equation}\label{equ: phi_k}
           \phi_k:=\phi(z_k)\qquad(k=1,...,N).
      \end{equation}
\end{enumerate}
\end{framed}
\end{samepage}
            \item[3. Step] Calculate the required solution via
                \begin{equation}\label{equ: rec}
                     u(x,t):=2\ii \lim_{z\to\infty}z\, \left[m(z;x,t)\right]_{12}.
                \end{equation}
          \end{description}
          Here $[\cdot]_{12}$ denotes the $1$-$2$-component of the matrix in the brackets.
    \end{enumerate}
\end{thm}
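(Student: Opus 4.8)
The plan is to realize the maps $\mathcal{S}_N$ via the direct scattering transform for the Zakharov--Shabat spectral problem underlying the Lax pair of (\ref{equ: nls}). Equation (\ref{equ: nls}) is the compatibility condition of a Lax pair whose spatial operator is
\begin{equation*}
    \partial_x\Psi=\left(-\ii z\,\sigma_3+Q(u)\right)\Psi,\qquad
    \sigma_3=\pmtwo{1}{0}{0}{-1},\quad Q(u)=\pmtwo{0}{u}{-\overline{u}}{0}.
\end{equation*}
For $u\in L^{2,s}(\Real)$ with $s>1/2$ (so that $u\in L^1$) I would construct the Jost solutions $\Psi_\pm(x,z)$ as the unique solutions of the associated Volterra integral equations, normalized by $\Psi_\pm(x,z)e^{\ii xz\sigma_3}\to\mathbf{1}$ as $x\to\pm\infty$. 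The weight $s>1/2$ is exactly what forces the relevant columns of $\Psi_\pm$ to extend analytically into $\Compl^{\pm}$ with boundary values in $H^s(\Real)$. Connecting the two Jost solutions by a scattering matrix, $\Psi_+=\Psi_-S(z)$, I would define $r(z):=S_{21}(z)/S_{11}(z)$; the coefficient $a(z):=S_{11}(z)$ continues analytically to $\Compl^+$, its zeros give the eigenvalues $z_1,\dots,z_N$, and the coupling constants $c_k$ are read off from the proportionality of the two Jost columns at $z=z_k$.

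I would then define $\mathcal{G}_N$ as the set of potentials for which $a$ has exactly $N$ zeros in $\Compl^+$, all simple, and no zero on $\Real$ (no spectral singularities). Each of these conditions is open and stable under small $L^{2,s}$ perturbations, which gives the openness of $\mathcal{G}_N$; density of $\mathcal{G}=\bigcup_N\mathcal{G}_N$ in $L^1$ follows because both non-simple eigenvalues and spectral singularities are non-generic, proving (i). For (ii), local Lipschitz continuity of $\mathcal{S}_N$ is inherited from the Lipschitz dependence of the Jost solutions on $u$ in the Volterra estimates, and transfers to $r$, $z_k$ and $c_k$; injectivity is the classical uniqueness of the inverse transform, that $u$ is determined by $(r;z_1,\dots,z_N;c_1,\dots,c_N)$.

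For (iii) I would follow the three stated steps. Setting $m(z;x,t):=\Psi(x,t,z)\,e^{(\phi(z)/2)\sigma_3}$ arranges that the $(x,t)$--dependence enters only through $\phi(z)=2\ii xz+4\ii z^2t$; the scattering relation $\Psi_+=\Psi_-S$ then produces precisely the jump $m_+=m_-V(r)$ with $V$ as in (\ref{equ: def V(r)}), the Jost normalization yields $m=\mathbf{1}+\mathcal{O}(1/z)$, and the proportionality of columns at $z_k$ together with the Schwarz symmetry $z\mapsto\overline{z}$ gives the residue conditions (\ref{equ: Res orig 2}). The time dependence is governed by the second Lax operator, under which the eigenvalues $z_k$ are conserved and the data acquire only the explicit factor $e^{4\ii z^2t}$---exactly the $4\ii z^2 t$ term already built into $\phi$. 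Uniqueness of the solution of RHP[NLS] follows from the standard vanishing lemma, since $V$ is Hermitian and positive definite on $\Real$ with $\det V=1$; expanding the unique $m$ at $z=\infty$ and matching the $\mathcal{O}(1/z)$ coefficient against the spatial Lax equation gives the reconstruction (\ref{equ: rec}). The hypothesis $u_0\in H^1$ serves only to guarantee that the reconstructed $u$ is a classical solution of (\ref{equ: nls}).

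The hard part is the low-regularity direct scattering analysis: showing $r\in H^s(\Real)$ with locally Lipschitz dependence on $u$ throughout the range $s\in(1/2,1]$, while controlling the zeros of $a$ so that the sets $\mathcal{G}_N$ are well defined and open. Once these mapping properties are established, the Riemann--Hilbert reformulation, the explicit time evolution of the data, and the reconstruction formula are all classical consequences of the Lax-pair structure together with the uniqueness theory for the RHP.
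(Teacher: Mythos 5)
Your proposal follows essentially the same route as the paper: the paper also "proves" this theorem only in outline, constructing the Jost solutions of the Zakharov--Shabat problem $v_x=P(z;x)v$, defining $a$ and $b$ as Wronskian determinants, setting $r=b/a$, defining $\mathcal{G}_N$ by the number of simple zeros of $a$, and deferring the genuinely hard points (openness and density of the $\mathcal{G}_N$ to Beals--Coifman, the $H^s$ mapping property and Lipschitz continuity of $r$ to Cuccagna, and unique solvability of \textbf{RHP[NLS]} to Deift--Park), exactly the ingredients you identify. Your scattering-matrix formulation, vanishing-lemma uniqueness, and Lax-pair time evolution are equivalent reformulations of the paper's sketch, so the two arguments coincide in substance.
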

The fact that \textbf{RHP[NLS]} is uniquely solvable is pointed out by Deift and Park in \cite{Deift2011}. For the convenience of the reader we show roughly how the scattering maps $\mathcal{S}_N$ are defined:
Given a function $u(x)$ we set
\begin{equation*}
    P(z;x):=
    \left(
      \begin{array}{cc}
        -\ii z & u(x) \\
        -\overline{u}(x) & \ii z \\
      \end{array}
    \right)
\end{equation*}
and consider the ODE
\begin{equation}\label{equ: vx=Pv}
    v_x(z;x)=P(z;x)v(z;x).
\end{equation}
We define $\psi_j^{(\pm)}$ ($j=1,2$) to be the unique $\Compl^2$-valued solutions of (\ref{equ: vx=Pv}) with the boundary conditions
\begin{equation*}
    \lim_{x\to\pm\infty}\psi^{(\pm)}_j(z;x)e^{\pm\ii x z}= e_j,\quad j=1,2,
\end{equation*}
where $e_1=(1,0)^T$ and $e_2=(0,1)^T$. In general if $u(\cdot)\in L^1(\Real)$, the functions $\psi^{(-)}_1$ and $\psi^{(+)}_2$ exist for $\im z\geq 0$ whereas $\psi^{(+)}_1$ and $\psi^{(-)}_2$ exist for $\im z\leq 0$ (see \cite{Ablowitz2004}). In both cases the dependence on $z$ is analytic. Due to $\tr P=0$, expressions such as $\det[\psi^{(-)}_1|\psi^{(+)}_2]$ or $\det[\psi^{(+)}_1|\psi^{(-)}_1]$ do not depend on $x$. We set
\begin{eqnarray*}
  a(z) &:=& \det[\psi^{(-)}_1(z;x)|\psi^{(+)}_2(z;x)], \\
  b(z) &:=& \det[\psi^{(+)}_1(z;x)|\psi^{(-)}_1(z;x)],
\end{eqnarray*}
such that $a$ is defined for $z\in\Compl_+$ and $b$ is defined for $z\in\Real$. Additionally the map $z\mapsto a(z)$ is analytic in the upper plane $\Compl_+$. The sets $\mathcal{G}_N$ stated in Theorem \ref{thm: scattering} are now defined by the number of zeros of $a$:
\begin{equation*}
    \mathcal{G}_N:=\set{u\in L^1(\Real)|a \text{ admits exactly $N$ simple zeros }z_1,...,z_N\in\Compl_+}.
\end{equation*}
In \cite{Beals1984} Beals and Coifman show, that the $\mathcal{G}_N$ are indeed open. Furthermore they prove statement (i) of Theorem \ref{thm: scattering}.
Now we amount to the definition of the scattering data $(r(z);z_1,...,z_N;c_1,...,c_N)$:\par
\textbf{Reflection coefficient}: The so-called \emph{reflection coefficient} $r$ is given by
\begin{equation}\label{equ: reflection coeff}
           r(z):=\frac{b(z)}{a(z)},\qquad z\in\Real.
\end{equation}
As it is shown in \cite{Cuccagna2014} by Cuccagna, we have $r\in H^s(\Real)$ in the case of $u\in L^{2,s}(\Real)$. Note the analogy to the Fourier transform. See also \cite{Zhou1998} for more general results.\par
\textbf{Poles:} The $z_k$ are defined to be the simple zeros of $a$. Hence, we have $a(z_k)=0$ but $a'(z_k)\neq0$ (the $'$ indicates the derivative with respect to the complex parameter $z$). We will refer to them as \emph{poles} and we will denote the set $\set{z_1,...,z_N}$ by $\mathcal{Z}_+$. Furthermore we set $\mathcal{Z}_-:=\set{\overline{z}_1,...,\overline{z}_N}$ and $\mathcal{Z}:=\mathcal{Z}_+\cup\mathcal{Z}_-$ \par
\textbf{Norming constants:} The so-called \emph{norming constants} $c_1,..,c_N$ are given by $c_k:=\gamma_k/a'(z_k)$, where $\gamma_k$ are defined by the equations $\psi^{(-)}_1(z_k;x)=\gamma_k\psi^{(+)}_2(z_k;x)$. Due to $$\det[\psi^{(-)}_1(z_k;x)|\psi^{(+)}_2(z_k;x)]=a(z_k)=0$$ the two vectors $\psi^{(-)}_1(z_k;x)$ and $\psi^{(+)}_2(z_k;x)$ are indeed linearly dependent, which implies that the numbers $\gamma_k$ exist. They do not depend on $x$ which is verified by differentiation.\\
\par
Now we turn to the explanation of the second step, stated in Theorem \ref{thm: scattering} (iii). For $u\in\mathcal{G}_N$ it is an elementary calculation (see \cite{Ablowitz2004}) to show that
\begin{equation*}
    m(z;x):=\left\{
                \begin{aligned}
                  \left[\left. \frac{\psi^{(-)}_1(z;x)e^{\ii zx}} {a(z)}
                  \right|\psi^{(+)}_2(z;x)e^{-\ii zx}
                  \right], & \qquad
                  \hbox{if }z\in\Compl_+, \\
                  \left[\psi^{(+)}_1(z;x)e^{\ii zx}
                  \left|\frac{\psi^{(-)}_2(z;x)e^{-\ii zx}}
                  {\overline{a(\overline{z})}}
                  \right.\right], & \qquad
                  \hbox{if }z\in\Compl_-,
                \end{aligned}
              \right.
\end{equation*}
solves the following Riemann Hilbert problem:
\begin{samepage}
\begin{framed}
\textbf{RHP:}\\Find a $2\times 2$-matrix valued function $\Compl\ni z\mapsto m(z;x)$ which satisfies
\begin{enumerate}[(i)]
  \item $m(z;x)$ is meromorphic in $z$ on $\Compl\setminus\Real$.
  \item $m(z;x)=1+\mathcal{O}\left(\frac{1}{z}\right)$ as $|z|\to\infty$.
  \item The non-tangential boundary values $m_{\pm}(z;x)$ exist for $z\in\Real$ and satisfy the jump relation $m_+=m_-V$, where
      \begin{equation}\label{equ: V(z,x)}
        V(z;x)=
        \left(
          \begin{array}{cc}
            1+|r(z)|^2 & e^{-2\ii x z}\overline{r}(z) \\
            e^{2\ii xz}r(z) & 1 \\
          \end{array}
        \right).
      \end{equation}
  \item $m(z;x)$ has simple poles at $z_1,...,z_N,\overline{z}_1,...,\overline{z}_N$ with
      \begin{equation}\label{equ: res (x)}
        \begin{aligned}
          \res_{z=z_k}m(z)&=\lim_{z\to z_k}m(z)
          \left(
            \begin{array}{cc}
              0 & 0 \\
              c_k e^{2\ii xz_k} & 0
            \end{array}
          \right),\\
          \res_{z=\overline{z}_k}m(z)&=\lim_{z\to \overline{z}_k}m(z)
          \left(
            \begin{array}{cc}
              0 & -\overline{c}_k e^{-2\ii x \overline{z}_k} \\
              0 & 0
            \end{array}
          \right).
        \end{aligned}
      \end{equation}
\end{enumerate}
\end{framed}
\end{samepage}

From the differential equation (\ref{equ: vx=Pv}) one can obtain the asymptotic behavior of the functions $\psi_j^{(\pm)}(z;x)$ as $z\to\infty$. For instance we have (see page 25 in \cite{Ablowitz2004})
\begin{equation*}
    \psi^{(\pm)}_2(z;x)e^{-\ii zx}=
    \left(
      \begin{array}{c}
        0 \\
        1 \\
      \end{array}
    \right)+\frac{1}{2\ii z}
    \left(
      \begin{array}{c}
        u(x) \\
        \int_x^{\pm\infty}|u(y)|^2dy \\
      \end{array}
    \right)+\mathcal{O}\left(\frac{1}{z^2}\right),
\end{equation*}
which is equivalent to the following important formula:
\begin{equation*}
    u(x)=2\ii\lim_{z\to\infty}z [m(z;x)]_{12}.
\end{equation*}
Here $m(z;x)$ is the matrix defined from the functions $\psi_j^{\pm}$ as above. So far we have described  the forward scattering and the inverse scattering, since we can reconstruct the function $u$ from its scattering data.\\
Now we are going to take into account the time $t$. If $u$ also depends on $t$ (i.e. $u=u(x,t)$) and $u(\cdot,t)\in L^1(\Real)$ for any $t\in \Real$, we can obtain the functions $a$ and $b$ as above for all times $t\in \Real$. Thus, we have $a(z;t)$ and $b(z;t)$ and we can ask for the time evolution of these two functions. The miraculous fact is the following: if $u(x,t)$ solves the NLS equation (\ref{equ: nls}) and $u(\cdot,t)\in H^1(\Real)$ for all $t\in\Real$, then
\begin{equation*}
    \partial_t a(z;t) = 0 \quad\text{ and }\quad
  \partial_t b(z;t) = 4\ii z^2 b(z;t).
\end{equation*}
The derivation of these equations is based on the Lax pair representation of the NLS equation (see \cite{Deift1994}). Solving them for $a$ and $b$ we obtain $a(z;t)=a(z;0)$ and $r(z;t)=e^{4\ii z^2t}r(z;0)$. In particular, if at time $t=0$ the function $u(x,0)$ produces $N$ simple zeros $z_1,...,z_N$ of $z\mapsto a(z;0)$ and if $u$ evolves accordingly to the NLS equation, then $u(\cdot,t)$ will produce exactly the same $N$ simple zeros at any other time $t\in\Real$. In particular, the sets $\mathcal{G}_N$ are invariant under the flow of the NLS equation.\\
Since the poles $z_1,...,z_N$ remain unchanged over time, we can find by the same arguments as above the norming constants $c_k(t)$. They now depend on $t$ and their evolution is given by $c_k(t)=c_k(0)e^{4\ii z_k^2t}$. Altogether the scattering data of a function $u(\cdot,t)$, which is a solution of the NLS equation (\ref{equ: nls}), is given at time $t$ by
\begin{equation}\label{equ: evolution of scattering}
    (e^{4\ii z^2t}r(z);z_1,...,z_N;e^{4\ii z_1^2t}c_1,...,e^{4\ii z_N^2t}c_N),
\end{equation}
where $(r(z);z_1,...,z_N;c_1,...,c_N)$ are obtained from the initial data $u(x,0)=u_0(x)$. Inserting the time dependence into (\ref{equ: V(z,x)}) and (\ref{equ: res (x)}) we end up exactly with (\ref{equ: def V(r)}) and (\ref{equ: Res orig 2}). Summarized the method of (inverse) scattering works as follows:
\begin{equation}\label{tab: sheme of scattering}
\begin{xy}\xymatrixcolsep{5pc}
  \xymatrix{
    u_0\in L^{2,s}(\Real)\cap\mathcal{G}_N\ar@{|->}[d]_{\text{NLS equation (\ref{equ: nls})}} \ar@{|->}[r]^{\mathcal{S}_N} &  \ar@{|->}[d]_{\text{see (\ref{equ: evolution of scattering})}} (r(z);z_1,...,z_N;c_1,...,c_N)\\
    u(x,t)  &  \ar@{|->}[l]_-{\text{solve \textbf{RHP[NLS]}}} (e^{4\ii z^2t}r(z);z_1,...,z_N;e^{4\ii z_1^2t}c_1,...,e^{4\ii z_N^2t}c_N)
  }
\end{xy}
\end{equation}
We now give a definition of $N$-solitons in terms of the scattering data:
\begin{defn}
    A solution $u$ of (\ref{equ: nls}) is called \emph{$N$-soliton} or \emph{multi-soliton} if the initial datum $u_0$ belongs to $\mathcal{G}_N$ and the corresponding reflection coefficient vanishes ($r(z)\equiv 0$).
\end{defn}
If $u_0\in\mathcal{G}_0$ and $r\equiv 0$ the Riemann Hilbert problem \textbf{RHP[NLS]} (see Theorem \ref{thm: scattering}) then reduces to: (i) $m(z;x,t)$ is entire (with respect to $z$); (ii) $m(z;x,t)=1+\mathcal{O}(z^{-1})$ as $|z|\to\infty$. By Liouville's Theorem it follows that $m(z;x,t)\equiv 1$ and applying (\ref{equ: rec}) we obtain $u(x,t)\equiv 0$.\\
In the case of $N=1$ the ansatz
\begin{equation*}
    m(z;x,t)=1+\frac{A(x,t)}{z-z_1}+ \frac{\widetilde{A}(x,t)}{z-\overline{z}_1},
\end{equation*}
reduces \textbf{RHP[NLS]} to an algebraic system, which is solved by
\begin{equation}\label{equ: A and A tilde}
    \begin{aligned}
        A(x,t)=
        \left(
          \begin{array}{cc}
            \frac{2 \ii |c_1|^2e^{2\re(\phi_1(x,t))}\im(z_1)}{|c_1|^{2} e^{2\re(\phi_1(x,t))}+4 \im(z_1)^2} & 0 \\
            \im(z_1)e^{\ii[\arg(c_1)+\re(\phi_1(x,t))]}
        \sech\left[-\re(\phi_1(x,t))- \ln\left(\frac{|c_1|}{2\im(z_1)}\right)\right] & 0 \\
          \end{array}
        \right),\\
        \widetilde{A}(x,t)=
        \left(
         \begin{array}{cc}
            0 & -\im(z_1)e^{-\ii[\arg(c_1)+\re(\phi_1(x,t))]}
            \sech\left[-\re(\phi_1(x,t))- \ln\left(\frac{|c_1|}{2\im(z_1)}\right)\right] \\
            0 & \frac{-2 \ii |c_1|^2e^{2\re(\phi_1(x,t))}\im(z_1)}{|c_1|^{2} e^{2\re(\phi_1(x,t))}+4 \im(z_1)^2} \\
         \end{array}
        \right).
    \end{aligned}
\end{equation}
The explicit solution of the NLS equation, which can now be obtained by the reconstruction formula (\ref{equ: rec}), is commonly called \emph{soliton} or \emph{$1$-soliton}:
\begin{multline}\label{equ: 1-soliton}
        \sol{1}{z_1,c_1}:=
        -2\ii\im(z_1)e^{-\ii [ \arg(c_1)+2\re(z_1)x+ 4\re(z_1^2)t]}\\ \times
        \sech\left[2\im(z_1)(x+ 4\re(z_1)t)- \ln\left(\frac{|c_1|}{2\im(z_1)}\right)\right].
\end{multline}
It describes a single wave packet which is centered at
\begin{equation}\label{equ: center of sol}
    x_0=(2\im(z_1))^{-1}\ln\left(\frac{|c_1|}{2\im(z_1)}\right)
    -4\re(z_1)t.
\end{equation}
So we see, that the wave is propagating with the velocity $v=-4\re(z_1)$. In doing so, its envelope remains undistorted. Thus $\sol{1}{z_1,c_1}$ is indeed a soliton in the sense of the definition of Drazin and Johnson (see Section 1.2 in \cite{Drazin1989}). Multisolitons are not solitons in the sense of D. and J. but it can be shown that for $\re(z_j)\neq\re(z_k)$ ($j\neq k$) a $N$-soliton splits into $N$ individual $1$-solitons (see \cite{Zakharov1972}).

\section{Separating the Poles}\label{sec: poles}
The	quintessence of Lemmata \ref{lem: RHP1} and \ref{lem: m^1 approx m^sol} of this section we will be the following observation:
the set of those poles who will contribute to the solution $u(x,t)$ depends on the ratio $-x/(4t)$.\\
For the parameter
\begin{equation}\label{equ: def xi}
    \xi:=\frac{-x}{4t}
\end{equation}
we find
\begin{equation*}
    \re\phi(z;x,t) = 8\im(z) t(\xi-\re(z)).
\end{equation*}
and we conclude for $t>0$:
\begin{align*}
    \re\phi(z;x,t)>0,\quad&\text{ if }\left\{
    \begin{aligned}
        &&\im(z)>0\text{ and }\re(z)<\xi,\\
        &\text{or }&\im(z)<0\text{ and }\re(z)>\xi,
    \end{aligned}\right.
    \\
    \re\phi(z;x,t)<0,\quad&\text{ if }\left\{
    \begin{aligned}
        &&\im(z)>0\text{ and }\re(z)>\xi,\\
        &\text{or }&\im(z)<0\text{ and }\re(z)<\xi.
    \end{aligned}\right.
\end{align*}
For the $\phi_k$ defined in (\ref{equ: phi_k}) we have
\begin{equation}\label{equ: lim e^phi_k}
    \lim_{t\to\infty}|e^{\phi_k}|=
    \left\{
      \begin{array}{ll}
        0, & \hbox{if }\re z_k>\xi, \\
        \infty, & \hbox{if }\re z_k<\xi,
      \end{array}
    \right.
\end{equation}
and
\begin{equation*}
    |e^{\phi_k}|=1 \text{ if }\re z_k=\xi.
\end{equation*}
Hence for a fixed $\xi$ the poles $z_1,...,z_N$ are split in two classes. We set:
\begin{equation}\label{equ: def nabla and laplace}
    \begin{aligned}
        \bigtriangledown(\xi)&:=\set{k\in\set{1,...,N}|\re z_k<\xi},\\
        \bigtriangleup(\xi)&:=\set{k\in\set{1,...,N}|\re z_k\geq\xi}.
    \end{aligned}
\end{equation}
Since we do not exclude the case where two poles have the same real part, we have to label the poles in a new matter. We group the poles with respect to theirs real parts:
\begin{equation}\label{equ: new label of Z}
      \left\{
        \begin{array}{ll}
          \mathcal{Z}_+=\set{z_1,...,z_N}=\set{z^{(1)}_1,...,z^{(1)}_{m_1},\;
       z^{(2)}_1,...,z^{(2)}_{m_2},\;
       ...\quad...,
       z^{(K)}_1,...,z^{(K)}_{m_K}},\\
          m_l\geq 1,\quad
    \sum_{l=1}^K m_l =N,\\
        \re z^{(l)}_j=\re z^{(p)}_h\quad\Leftrightarrow\quad l=p.
       \end{array}
      \right.
\end{equation}
For $t$ sufficiently large the set
\begin{equation}\label{equ: def square}
    \square(\xi):=\set{z\in\mathcal{Z}\left|\phantom{|^i}\right. \!\!\!\!|\re(z)-\xi|\leq 1/\sqrt{t}}
\end{equation}
depends only on $\xi$ and is either empty or equals exactly $\set{z^{(l)}_1,...,z^{(l)}_{m_l}, \overline{z}^{(l)}_1,...,\overline{z}^{(l)}_{m_l}}$ for one certain $l$. Now we define the contour
\begin{equation}\label{equ: def Sigma}
    \Sigma^{(1)}(x,t):=\bigcup_{\substack{z\in\mathcal{Z}\\z\notin \square(\xi)}} \partial B_{1/\sqrt{t}}(z),
\end{equation}
Next we set
\begin{equation}\label{equ: def T(z)}
    T(z;x,t):=\prod_{k\in\bigtriangledown(\xi)} \frac{z-z_k}{z-\overline{z}_k},
\end{equation}
and
\begin{equation}\label{equ: def D(z)}
    D(z;x,t):=T(z;x,t)^{\sigma_3}:=
    \left(
      \begin{array}{cc}
        T(z;x,t) & 0 \\
        0 & T(z;x,t)^{-1}\\
      \end{array}
    \right),
\end{equation}
such that we are now in a position to formulate the first modification of \textbf{RHP[NLS]}. From now on we will often drop the dependence on $x$ and $t$. For $m:\Compl\to\Compl^{2\times 2}$ we set
\begin{equation}\label{equ: def m^1}
    m^{(1)}(z):=
    \left\{
      \begin{array}{ll}\vspace{.1cm}
        m(z)\left(
              \begin{array}{cc}
                1 & -\frac{z-z_k}{c_ke^{\phi_k}} \\
                0 & 1 \\
              \end{array}
            \right)
        D(z), & \hbox{if }z\in B_{1/\sqrt{t}}(z_k), k\in\bigtriangledown(\xi), z_k\notin \square(\xi), \\ \vspace{.1cm}
        m(z)\left(
              \begin{array}{cc}
                1 & 0 \\
                -\frac{c_ke^{\phi_k}}{z-z_k} & 1 \\
              \end{array}
            \right)
        D(z), & \hbox{if }z\in B_{1/\sqrt{t}}(z_k), k\in\bigtriangleup(\xi), z_k\notin \square(\xi), \\ \vspace{.1cm}
        m(z)\left(
              \begin{array}{cc}
                1 & 0\\
                \frac{z-\overline{z}_k} {\overline{c}_ke^{\overline{\phi}_k}} & 1 \\
              \end{array}
            \right)
        D(z), & \hbox{if }z\in B_{1/\sqrt{t}}(\overline{z}_k), k\in\bigtriangledown(\xi), \overline{z}_k\notin \square(\xi), \\ \vspace{.1cm}
        m(z)\left(
              \begin{array}{cc}
                1 & \frac{\overline{c}_ke^{\overline{\phi}_k}} {z-\overline{z}_k} \\
                0 & 1 \\
              \end{array}
            \right)
        D(z), & \hbox{if }z\in B_{1/\sqrt{t}}(\overline{z}_k), k\in\bigtriangleup(\xi), \overline{z}_k\notin \square(\xi), \\
        m(z)D(z), & \hbox{else.}
      \end{array}
    \right.
\end{equation}
\begin{samepage}
\begin{lem}\label{lem: RHP1}
    If $m(z)$ solves \textbf{RHP[NLS]}, then $m^{(1)}(z)$ defined in (\ref{equ: def m^1}) is a solution to the following RHP:
    \begin{framed}
    \textbf{RHP[1]:}
    \begin{enumerate}[(i)]
      \item $m^{(1)}(z)$ is meromorphic in $\Compl\setminus (\Sigma^{(1)}\cup \Real)$.
      \item $m^{(1)}(z)=1+\mathcal{O}\left(\frac{1}{z}\right)$ as $|z|\to\infty$.
      \item If $\square(\xi)=\varnothing$, $m^{(1)}$ has no poles (i.e. $m^{(1)}$ is analytic on $\Compl\setminus (\Sigma^{(1)}\cup \Real)$). If $\square(\xi)$ consists of certain $z_k$ and $\overline{z}_{k}$ such that $k\in\bigtriangledown(\xi)$, $m^{(1)}$ has simple poles at these $z_k$ and $\overline{z}_{k}$ with:
          \begin{equation}\label{equ: Res 1 j in nabla}
            \begin{aligned}
              \res_{z=z_k}m^{(1)}(z)&=\lim_{z\to z_k}m^{(1)}(z)
              \left(
                \begin{array}{cc}
                  0 & \frac{1}{c_k e^{\phi_k}(T'(z_k))^2} \\
                  0 & 0
                \end{array}
              \right),\\
              \res_{z=\overline{z}_k}m^{(1)}(z)&=\lim_{z\to \overline{z}_k}m^{(1)}(z)
              \left(
                \begin{array}{cc}
                  0 & 0 \\
                  \frac{-1}{\overline{c}_k e^{\overline{\phi}_k} (\overline{T'(z_k)})^2} & 0
                \end{array}
              \right).
            \end{aligned}
          \end{equation}
          If $\square(\xi)$ consists of certain $z_k$ and $\overline{z}_{k}$ such that $k\in\bigtriangleup(\xi)$, $m^{(1)}$ has simple poles at these $z_k$ and $\overline{z}_{k}$ with:
          \begin{equation}\label{equ: Res 1 j in laplace}
            \begin{aligned}
              \res_{z=z_{k}}m^{(1)}(z)&=\lim_{z\to z_{k}}m^{(1)}(z)
              \left(
                \begin{array}{cc}
                  0 & 0 \\
                  c_{k} e^{\phi_{k}}(T(z_{k}))^2& 0
                \end{array}
              \right),\\
              \res_{z=\overline{z}_{k}}m^{(1)}(z)&=\lim_{z\to \overline{z}_{k}}m^{(1)}(z)
              \left(
                \begin{array}{cc}
                  0 & -\overline{c}_{k} e^{\overline{\phi}_{k}} (\overline{T(z_{k})})^2 \\
                  0 & 0
                \end{array}
              \right).
            \end{aligned}
          \end{equation}
      \item The non-tangential boundary values $m_{\pm}^{(1)}(z)$ exist for $z\in\Sigma^{(1)}\cup\Real$ and satisfy the jump relation $m_+^{(1)}=m_-^{(1)}V^{(1)}$, where
          \begin{equation}\label{equ: def V^1}
            V^{(1)}(z)=
              \left\{
                \begin{array}{ll}\vspace{.1cm}
                  \left(
                      \begin{array}{cc}
                       1 & \frac{z-z_k}{c_ke^{\phi_k}(T(z))^2} \\
                       0 & 1 \\
                      \end{array}
                  \right), & \hbox{if } z\in \partial B_{1/\sqrt{t}}(z_k), k\in\bigtriangledown(\xi), z_k\notin\square(\xi),\\ \vspace{.1cm}
                  \left(
                      \begin{array}{cc}
                       1 & 0 \\
                       \frac{c_ke^{\phi_k}(T(z))^2}{z-z_k} & 1 \\
                      \end{array}
                  \right), & \hbox{if }z\in \partial B_{1/\sqrt{t}}(z_k),k\in\bigtriangleup(\xi), z_k\notin\square(\xi),  \\ \vspace{.1cm}
                  \left(
                      \begin{array}{cc}
                       1 & 0 \\
                       -\frac{(z-\overline{z}_k)(T(z))^2} {\overline{c}_ke^{\overline{\phi}_k}} & 1 \\
                      \end{array}
                  \right), & \hbox{if }z\in \partial B_{1/\sqrt{t}}(\overline{z}_k), k\in\bigtriangledown(\xi), \overline{z}_k\notin\square(\xi), \\ \vspace{.1cm}
                  \left(
                      \begin{array}{cc}
                       1 & \frac{-\overline{c}_k e^{\overline{\phi}_k}} {(z-\overline{z}_k)(T(z))^2} \\
                       0& 1 \\
                      \end{array}
                  \right), & \hbox{if }z\in \partial B_{1/\sqrt{t}}(\overline{z}_k), k\in\bigtriangleup(\xi), \overline{z}_k\notin\square(\xi),\\
                  D^{-1}(z)V(z)D(z), & \hbox{if }z\in\Real.
                \end{array}
              \right.
          \end{equation}
    \end{enumerate}
    \end{framed}
\end{lem}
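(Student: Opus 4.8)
The plan is to verify directly that the explicitly constructed matrix $m^{(1)}$ of (\ref{equ: def m^1}) inherits properties (i)--(iv) from the known solution $m$ of \textbf{RHP[NLS]}, using only the rational structure of $D(z)=T(z)^{\sigma_3}$ and of the triangular factors. Properties (i) and (ii) are routine. For (i), $m$ is meromorphic on $\Compl\setminus\Real$ and both $T$ and the triangular matrices are rational in $z$, so each branch of (\ref{equ: def m^1}) is meromorphic on its domain; the only new discontinuities occur where the defining formula switches, that is, across the circles making up $\Sigma^{(1)}$. For (ii), for large $|z|$ we are in the ``else'' branch $m^{(1)}=mD$, and since $T$ is a quotient of two monic polynomials of the same degree we have $T(z)=1+\mathcal O(1/z)$, whence $D(z)=1+\mathcal O(1/z)$; combined with $m=1+\mathcal O(1/z)$ this yields the normalization.

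The heart of the matter is (iii), which I would organize around the zero/pole structure of $T$ at the nodes: for $k\in\bigtriangledown(\xi)$ the product $T$ has a simple zero at $z_k$ (so $T(z)\sim (z-z_k)T'(z_k)$ and $T^{-1}$ has a simple pole there), whereas for $k\in\bigtriangleup(\xi)$ one has $T(z_k)\neq 0,\infty$. Writing the first column of $m$ near $z_k$ as $\tfrac{c_ke^{\phi_k}}{z-z_k}(m_{12}(z_k),m_{22}(z_k))^{T}+(\text{analytic})$, which is exactly the content of (\ref{equ: Res orig 2}), I would split into poles outside and inside $\square(\xi)$. For $z_k\notin\square(\xi)$ the triangular factor is tuned to remove the singularity: in the $\bigtriangledown$-case the first column of $m^{(1)}$ equals $T$ times the first column of $m$, so the simple zero of $T$ absorbs the pole, while in the second column the factor $(z-z_k)$ of the upper-triangular matrix converts the polar part into a difference quotient of the analytic second column of $m$, cancelling the simple pole of $T^{-1}$; in the $\bigtriangleup$-case the lower-triangular factor $-c_ke^{\phi_k}/(z-z_k)$ performs the cancellation outright, $D$ being regular there. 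In every subcase $m^{(1)}$ extends analytically across $z_k$, so these poles vanish, to reappear only as jumps on $\Sigma^{(1)}$. For $z_k\in\square(\xi)$ we are in the ``else'' branch $m^{(1)}=mD$, and conjugation by $D$ merely relocates or rescales the pole: when $k\in\bigtriangledown$ the zero of $T$ clears it from the first column and the pole of $T^{-1}$ installs it in the second, whereas when $k\in\bigtriangleup$ it remains in the first column, rescaled by $T(z_k)$. A short Laurent expansion then reproduces (\ref{equ: Res 1 j in nabla}) and (\ref{equ: Res 1 j in laplace}); the exponents $(T'(z_k))^{\pm2}$ and $(T(z_k))^{\pm2}$ appear because essentially two powers of $T$ (resp. $T'$) arise, one from the residue itself and one from re-expressing $\lim_{z\to z_k}m^{(1)}$ through the surviving column. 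The conjugate nodes $\overline z_k$ are treated verbatim using the second column of (\ref{equ: Res orig 2}).

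For (iv) I would compute each jump as the ratio of one-sided limits. On $\Real$ the balls do not meet the axis once $t$ is large, so $m^{(1)}=mD$ on both sides and, $D$ being analytic (hence continuous) across $\Real$, the jump is $V^{(1)}=(m^{(1)}_-)^{-1}m^{(1)}_+=D^{-1}VD$, the last line of (\ref{equ: def V^1}). Across a circle $\partial B_{1/\sqrt t}(z_k)$ with $z_k\notin\square(\xi)$ the exterior value is $mD$ and the interior value is $mBD$, with $B$ the relevant triangular factor; orienting the circles so that their interiors lie on the $-$ side gives $V^{(1)}=(mBD)^{-1}(mD)=D^{-1}B^{-1}D$. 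The conjugation by the diagonal $D$ rescales the $(1,2)$ and $(2,1)$ off-diagonal entries of $B^{-1}$ by $T(z)^{-2}$ and $T(z)^{2}$ respectively, which reproduces the first four lines of (\ref{equ: def V^1}); the circles about $\overline z_k$ follow by the Schwarz symmetry of the problem.

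I expect the main obstacle to be the bookkeeping in (iii): one must simultaneously track the orientation-dependent signs of the triangular factors, which column carries the pole before and after the $D$-conjugation, and the precise cancellation of the $(z-z_k)$-zero of $T$ against the $1/(z-z_k)$-pole in the $\bigtriangledown$-case, where a single misplaced power of $T'(z_k)$ corrupts the exponent in (\ref{equ: Res 1 j in nabla}). A secondary point demanding care is fixing once and for all the orientation of the components of $\Sigma^{(1)}$ so that the signs in $V^{(1)}$ stay consistent with the residue normalizations; with these conventions pinned down, the remaining computations are elementary.
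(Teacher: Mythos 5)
Your proposal is correct and takes essentially the same route as the paper: your column-wise Laurent description of $m$ near $z_k$ is exactly the paper's pair of identities (\ref{equ: A_k v = 0}) and (\ref{equ: A_k = B_k v}) extracted from (\ref{equ: Res orig 2}), and the pole-cancellation for $z_k\notin\square(\xi)$, the $D$-conjugation for $k\in\bigtriangleup(\xi)$, and the $T'(z_k)^{\pm2}$ bookkeeping for $k\in\bigtriangledown(\xi)$ all match the paper's computation. The orientation caveat you flag is the only delicate point, and with interiors on the minus side your jump matrices reproduce (\ref{equ: def V^1}).
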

\end{samepage}
\begin{proof}
    ($i$) is trivial, ($ii$) is a consequence of
    \begin{equation*}
        D(z)=1+\mathcal{O}\left(\frac{1}{z}\right) \quad\text{as } |z|\to\infty.
    \end{equation*}
    ($iv$) is also elementary. It remains to show, that ($iii$) holds. We have therefore to show that
    (\ref{equ: Res 1 j in nabla}) and
    (\ref{equ: Res 1 j in laplace}) are correct and moreover we have to show that the poles at $z_k$ and $\overline{z}_k$ are indeed removed in the case of $z_k,\overline{z}_k\notin\square(\xi)$. Firstly we consider $m^{(1)}$ close to $z_k$ in the case where $z_k\notin\square(\xi)$ and $k \in \bigtriangledown(\xi)$: Let $m$ be a solution of \textbf{RHP[NLS]}. Then we have
    \begin{equation*}
        m(z)=\frac{A_k}{z-z_k}+B_k+\mathcal{O}(|z-z_k|) \quad(\text{as }z\to z_k)
    \end{equation*}
    with suitable matrices $A_k=A_k(x,t)$ and $B_k=B_k(x,t)$. The residua conditions in \textbf{RHP[NLS]} then yield
    the following two relations:
    \begin{equation}\label{equ: A_k v = 0}
        A_k
          \left(
            \begin{array}{cc}
              0 & 0 \\
              c_k e^{\phi_k} & 0
            \end{array}
          \right)=0,
    \end{equation}
    \begin{equation}\label{equ: A_k = B_k v}
        A_k=B_k
        \left(
            \begin{array}{cc}
              0 & 0 \\
              c_k e^{\phi_k} & 0
            \end{array}
          \right).
    \end{equation}
    By definition, (\ref{equ: A_k v = 0}) and
    (\ref{equ: A_k = B_k v}) we get
    \begin{eqnarray*}
      m^{(1)}(z) &=& m(z)
      \left(
        \begin{array}{cc}
          1 & -\frac{z-z_k}{c_ke^{\phi_k}} \\
          0 & 1 \\
        \end{array}
      \right)
        D(z)\\
       &=&\left[\frac{A_k}{z-z_k}+B_k+ \mathcal{O}(|z-z_k|)\right]
       \left[1+
       \left(
        \begin{array}{cc}
          0 & \frac{-1}{c_ke^{\phi_k}} \\
          0 & 0 \\
        \end{array}
       \right)(z-z_k)
       \right]\\&&\qquad
       \left[\frac{
       \left(
         \begin{array}{cc}
           0 & 0 \\
           0 & \frac{1}{T'(z_k)} \\
         \end{array}
       \right)}
       {z-z_k}+
       \left(
         \begin{array}{cc}
           0 & 0 \\
           0 & \ast \\
         \end{array}
       \right)
       + \mathcal{O}(|z-z_k|)\right] \\
       &=&\mathcal{O}(1)
    \end{eqnarray*}
    and it follows that there is no pole at $z_k$. In the case of $k\in\bigtriangleup(\xi)$ we find:
    \begin{eqnarray*}
      m(z)\left(
              \begin{array}{cc}
                1 & 0 \\
                -\frac{c_ke^{\phi_k}}{z-z_k} & 1 \\
              \end{array}
            \right)
       &=&  \left[\frac{A_k}{z-z_k}+B_k+ \mathcal{O}(|z-z_k|)\right]
       \left[1+\frac{
       \left(
        \begin{array}{cc}
          0 & 0 \\
          -c_ke^{\phi_k} & 0 \\
        \end{array}
       \right)}{z-z_k}
       \right]\\
       &=&\frac{A_k
       \left(
        \begin{array}{cc}
          1 & 0 \\
          -c_ke^{\phi_k} & 1 \\
        \end{array}
       \right)}{(z-z_k)^2}+\frac{
       B_k
       \left(
        \begin{array}{cc}
          0 & 0 \\
          -c_ke^{\phi_k} & 0 \\
        \end{array}
       \right)+A_k
       }{z-z_k}+\mathcal{O}(1)\\
    &\stackrel{
    (\ref{equ: A_k v = 0})\&
    (\ref{equ: A_k = B_k v})}{=}&\mathcal{O}(1)
    \end{eqnarray*}
    Since $D(z)$ has no pole at $z_k$ ($k\in\bigtriangleup(\xi)$), it is clear that also $m^{(1)}(z)=\mathcal{O}(1)$ as $z\to z_k$.\\
    The calculations for $\overline{z}_k\notin\square(\xi)$ ($k\in(\bigtriangledown(\xi)\cup\bigtriangleup(\xi))$) are similar. Now we turn to establish the first line of
    (\ref{equ: Res 1 j in nabla}): Let us assume $z_k\in \square(\xi)$ and $k\in\bigtriangledown(\xi)$.
    We use
    \begin{equation*}
        m(z)=\frac{A_k}{z-z_k}+B_k+ C_k(z-z_k)+\mathcal{O}(|z-z_k|^2)
    \end{equation*}
    and
    \begin{equation*}
        D(z)=\frac{
       \left(
         \begin{array}{cc}
           0 & 0 \\
           0 & \frac{1}{T'(z_k)} \\
         \end{array}
       \right)}
       {z-z_k}+
       \left(
         \begin{array}{cc}
           0 & 0 \\
           0 & \ast \\
         \end{array}
       \right)+
       \left(
         \begin{array}{cc}
           T'(z_k) & 0 \\
           0 & \ast \\
         \end{array}
       \right)(z-z_k)+\mathcal{O}(|z-z_k|^2)
    \end{equation*}
    to obtain for $z$ close to $z_k$
    \begin{eqnarray*}
      m^{(1)}(z)
       &\stackrel{(\ref{equ: A_k v = 0})}{=}&
       \frac{B_k
       \left(
         \begin{array}{cc}
           0 & 0 \\
           0 & \frac{1}{T'(z_k)} \\
         \end{array}
       \right)
       }{z-z_k}+
       A_k
       \left(
         \begin{array}{cc}
           T'(z_k) & 0 \\
           0 & \ast \\
         \end{array}
       \right)+B_k
       \left(
         \begin{array}{cc}
           0 & 0 \\
           0 & \ast \\
         \end{array}
       \right)+C_k
       \left(
         \begin{array}{cc}
           0 & 0 \\
           0 & \frac{1}{T'(z_k)} \\
         \end{array}
       \right)\\&&\quad+\mathcal{O}(|z-z_k|).
    \end{eqnarray*}
    On the one hand, from this expansion we find
    \begin{equation}\label{equ: Res m^1}
        \res_{z=z_k}m^{(1)}(z)=
        B_k
       \left(
         \begin{array}{cc}
           0 & 0 \\
           0 & \frac{1}{T'(z_k)} \\
         \end{array}
       \right)
    \end{equation}
    and on the other hand
    \begin{equation}\label{equ: lim m^1}
        \begin{aligned}
        &\lim_{z\to z_k}m^{(1)}(z)
              \left(
                \begin{array}{cc}
                  0 & \frac{1}{c_k e^{\phi_k}(T'(z_k))^2} \\
                  0 & 0
                \end{array}
              \right)
            \\&=\left[
              A_k
       \left(
         \begin{array}{cc}
           T'(z_k) & 0 \\
           0 & \ast \\
         \end{array}
       \right)+B_k
       \left(
         \begin{array}{cc}
           0 & 0 \\
           0 & \ast \\
         \end{array}
       \right)+C_k
       \left(
         \begin{array}{cc}
           0 & 0 \\
           0 & \frac{1}{T'(z_k)} \\
         \end{array}
       \right)
              \right]
              \left(
                \begin{array}{cc}
                  0 & \frac{1}{c_k e^{\phi_k}(T'(z_k))^2} \\
                  0 & 0
                \end{array}
              \right)\\
        &=A_k \left(
                \begin{array}{cc}
                  0 & \frac{1}{c_k e^{\phi_k}T'(z_k)} \\
                  0 & 0
                \end{array}
              \right)\\
         &= B_k
       \left(
         \begin{array}{cc}
           0 & 0 \\
           0 & \frac{1}{T'(z_k)} \\
         \end{array}
       \right).
        \end{aligned}
    \end{equation}
    (\ref{equ: Res m^1}) and (\ref{equ: lim m^1}) prove the first line of (\ref{equ: Res 1 j in nabla}). The second line follows from analog calculations. Alternatively we can say that the first line of (\ref{equ: Res 1 j in nabla}) implies the second since $m^{(1)}$ obeys the symmetry
    \begin{equation}\label{equ: symmetry of m^1}
        \overline{m^{(1)}(z)}=\sigma_2 m^{(1)}(\overline{z})\sigma_2,
    \end{equation}
    which can be derived from the symmetries $\overline{m(z)}=\sigma_2 m(\overline{z})\sigma_2$
    and $\overline{D(z)}=\sigma_2 D(\overline{z})\sigma_2$. Now we prove (\ref{equ: Res 1 j in laplace}). Let $z_k\in \square(\xi)$ and $k\in\bigtriangleup(\xi)$.
    \begin{eqnarray*}
      \res_{z=z_k}m^{(1)}(z)
        &=& \left[\res_{z=z_k}m(z)\right]D(z_k) \\
        &=& \lim_{z\to z_k}\left[m(z)
            \left(
                \begin{array}{cc}
                  0 & 0 \\
                  c_k e^{\phi_k}& 0
                \end{array}
            \right)\right]D(z_k)\\
        &=& \lim_{z\to z_k}\left[m^{(1)}(z)D(z_k)^{-1}
              \left(
                \begin{array}{cc}
                  0 & 0 \\
                  c_k e^{\phi_k}& 0
                \end{array}
              \right)\right]    D(z_k)\\
        &=& \lim_{z\to z_k}\left[m^{(1)}(z)
              \left(
                \begin{array}{cc}
                  0 & 0 \\
                  c_k e^{\phi_k}(T(z_k))^2& 0
                \end{array}
              \right)\right]
    \end{eqnarray*}
    \begin{eqnarray*}
      \res_{z=\overline{z}_k}m^{(1)}(z)
        &=& \left[\res_{z=\overline{z}_k}m(z)\right] D(\overline{z}_k) \\
        &=& \lim_{z\to \overline{z}_k} \left[m(z)
            \left(
                \begin{array}{cc}
                  0 & -\overline{c}_k e^{\overline{\phi}_k} \\
                  0& 0
                \end{array}
            \right)\right]D(\overline{z}_k)\\
        &=& \lim_{z\to \overline{z}_k} \left[m^{(1)}(z)D(\overline{z}_k)^{-1}
              \left(
                \begin{array}{cc}
                  0 & -\overline{c}_k e^{\overline{\phi}_k} \\
                  0& 0
                \end{array}
            \right)\right]    D(\overline{z}_k)\\
        &=& \lim_{z\to \overline{z}_k}\left[m^{(1)}(z)
              \left(
                \begin{array}{cc}
                  0 & \frac{-\overline{c}_k e^{\overline{\phi}_k}} {(T(\overline{z}_k))^2} \\
                  0& 0
                \end{array}
            \right)\right] \\
        &=&\lim_{z\to \overline{z_k}}\left[m^{(1)}(z)
              \left(
                \begin{array}{cc}
                  0 & -\overline{c}_k e^{\overline{\phi}_k} (\overline{T(z_k)})^2 \\
                  0 & 0
                \end{array}
              \right)\right]
    \end{eqnarray*}
    The last step is possible, because of the symmetry $\overline{T(z)}=\frac{1}{T(\overline{z})}$.
\end{proof}
We have used the function $T(z,x,t)$ to define the transformation $m\mapsto m^{(1)}$. As a consequence the poles at $z_k$ (and $\overline{z}_k$, respectively) are removed and instead a jump on the correspondent disk boundaries appears. Next we are going to prove rigorously the fact that this jump $V^{(1)}$ on $\Sigma^{(1)}$ defined in (\ref{equ: def V^1}) does not meaningfully contribute to the solution of \textbf{RHP[1]} as $t\to\infty$. Therefore we consider again a Riemann Hilbert problem:
\begin{samepage}
\begin{framed}
\textbf{RHP[2]:}\\Find a $2\times 2$-matrix valued function $\Compl\ni z\mapsto m^{(2)}(z)$ which satisfies
\begin{enumerate}[(i)]
  \item $m^{(2)}(z)$ is meromorphic in $\Compl\setminus\Real$,
  \item $m^{(2)}(z)=1+ \mathcal{O}\left(\frac{1}{z}\right)$ as $|z|\to\infty$,
  \item If $\square(\xi)=\varnothing$, $m^{(2)}$ has no poles (i.e. $m^{(2)}$ is analytic in $\Compl\setminus\Real$). If $\square(\xi)$ consists of certain $z_k$ and $\overline{z}_{k}$ such that $k\in\bigtriangledown(\xi)$, $m^{(2)}$ has simple poles at these $z_k$ and $\overline{z}_{k}$ with:
      \begin{equation}\label{equ: Res sol j in nabla}
            \begin{aligned}
              \res_{z=z_k}m^{(2)}(z)&=\lim_{z\to z_k}m^{(2)}(z)
              \left(
                \begin{array}{cc}
                  0 & \frac{1}{c_k e^{\phi_k}(T'(z_k))^2} \\
                  0 & 0
                \end{array}
              \right),\\
              \res_{z=\overline{z}_k}m^{(2)}(z) &=\lim_{z\to \overline{z}_k}m^{(2)}(z)
              \left(
                \begin{array}{cc}
                  0 & 0 \\
                  \frac{-1}{\overline{c}_k e^{\overline{\phi}_k} (\overline{T'(z_k)})^2} & 0
                \end{array}
              \right).
            \end{aligned}
      \end{equation}
      If $\square(\xi)$ consists of certain $z_k$ and $\overline{z}_{k}$ such that $k\in\bigtriangleup(\xi)$, $m^{(2)}$ has simple poles at these $z_k$ and $\overline{z}_{k}$ with:
      \begin{equation}\label{equ: Res sol j in laplace}
            \begin{aligned}
              \res_{z=z_k}m^{(2)}(z)&=\lim_{z\to z_k}m^{(2)}(z)
              \left(
                \begin{array}{cc}
                  0 & 0 \\
                  c_k e^{\phi_k}(T(z_k))^2& 0
                \end{array}
              \right),\\
              \res_{z=\overline{z}_k}m^{(2)}(z) &=\lim_{z\to \overline{z}_k}m^{(2)}(z)
              \left(
                \begin{array}{cc}
                  0 & -\overline{c}_k e^{\overline{\phi}_k} (\overline{T(z_k)})^2 \\
                  0 & 0
                \end{array}
              \right).
            \end{aligned}
      \end{equation}
      \item The non-tangential boundary values $m_{\pm}^{(2)}(z)$ exist for $z\in\Real$ and satisfy the jump relation $m_+^{(2)}=m_-^{(2)}V^{(2)}$, where
          \begin{equation}\label{equ: def V^2}
            V^{(2)}(z)=
                  D^{-1}(z)V(z)D(z)
          \end{equation}
\end{enumerate}
\end{framed}
\end{samepage}
\textbf{RHP[2]} can be viewed as \textbf{RHP[1]} with $V^{(1)}|_{\Sigma^{(1)}}\equiv 1$. Since $\lim_{t\to\infty}V^{(1)}(z)=1$ for $z\in\Sigma^{(1)}$, due to (\ref{equ: lim e^phi_k}), it is not surprising that somehow the solution of \textbf{RHP[1]} is converging to that of \textbf{RHP[2]} as $t\to\infty$. Indeed, we have:
\begin{lem}\label{lem: m^1 approx m^sol}
    There is a matrix $C_1(x,t)$ for which
    \begin{equation*}
        \|C_1\|\leq c e^{-8\sqrt{t}}\qquad(t>0)
    \end{equation*}
    (with $c>0$ independent of $x$) holds and such that
    \begin{equation}\label{equ: m^1 approx m^sol}
         m^{(1)}(z)= \left[ 1+\frac{C_1}{z}+\mathcal{O}\left(\frac{1}{z^2}\right) \right]m^{(2)}(z)
    \end{equation}
    as $|z|\to\infty$. As indicated by the notation, here $m^{(1)}$ solves \textbf{RHP[1]} and $m^{(2)}$ is a solution to \textbf{RHP[2]}, respectively.
\end{lem}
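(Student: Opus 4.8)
The plan is to study the ratio $E(z):=m^{(1)}(z)\,[m^{(2)}(z)]^{-1}$ and to show that it solves a \emph{small-norm} Riemann--Hilbert problem whose jump is supported only on $\Sigma^{(1)}$ and is exponentially close to the identity; the matrix $C_1$ is then the first moment read off from the large-$z$ expansion of $E$. The comparison works because $m^{(1)}$ and $m^{(2)}$ share all of their data except the jump $V^{(1)}$ on the circles $\Sigma^{(1)}$, and that jump tends to the identity by (\ref{equ: lim e^phi_k}).

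First I would verify that $E$ has no singularities off $\Sigma^{(1)}$. On $\Real$ both functions carry the \emph{same} jump $V^{(2)}=D^{-1}VD$ (compare part (iv) of \textbf{RHP[1]} with (\ref{equ: def V^2})), so $E_+=m_-^{(1)}V^{(2)}(V^{(2)})^{-1}[m_-^{(2)}]^{-1}=E_-$ and $E$ continues analytically across the real axis. At each pole $z_k\in\square(\xi)$ the residue relations (\ref{equ: Res 1 j in nabla}) and (\ref{equ: Res sol j in nabla}) (resp. (\ref{equ: Res 1 j in laplace}) and (\ref{equ: Res sol j in laplace})) coincide; writing the common nilpotent residue factor as $N_k$, each of $m^{(1)},m^{(2)}$ has the local form $G(z)\bigl(I+N_k/(z-z_k)\bigr)$ with $G$ analytic and invertible, where I use $\det m^{(1)}=\det m^{(2)}=1$ (all jump and residue data have unit determinant). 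The singular factors cancel in $E=G^{(1)}(G^{(2)})^{-1}$, so $E$ is analytic at every $z_k$ and $\overline z_k$, and by inspection of (\ref{equ: def m^1}) also inside every disk of $\Sigma^{(1)}$. Since $m^{(2)}$ is analytic across $\Sigma^{(1)}$ its boundary values coincide there, and a short computation gives the jump $E_+=E_-V_E$ with $V_E=m^{(2)}V^{(1)}(m^{(2)})^{-1}$; together with $m^{(1)},m^{(2)}=1+\mathcal O(1/z)$ this exhibits $E$ as the solution of a pure jump problem on $\Sigma^{(1)}$ normalized to $1$ at infinity.

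The heart of the matter is the estimate of $V_E-I$ on $\Sigma^{(1)}$. Each piece of $\Sigma^{(1)}$ is a circle $\partial B_{1/\sqrt t}(z_k)$ (or $\partial B_{1/\sqrt t}(\overline z_k)$) about a pole with $z_k\notin\square(\xi)$, i.e. $|\re z_k-\xi|>1/\sqrt t$ by (\ref{equ: def square}). On such a circle $V^{(1)}-I$ is a single off-diagonal entry from (\ref{equ: def V^1}) carrying the factor $e^{\mp\phi_k}$; since $\re\phi_k=8\,\im(z_k)\,t(\xi-\re z_k)$, the separation $|\re z_k-\xi|>1/\sqrt t$ forces the relevant factor to be $\le e^{-8\,\im(z_k)\sqrt t}$ in exactly the direction occurring in (\ref{equ: def V^1}) (the quantitative form of (\ref{equ: lim e^phi_k})); the accompanying factors $T(z)^{\pm2}$ and $(z-z_k)^{\pm1}$ grow at most polynomially in $t$ on these circles and are dominated by the exponential. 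Conjugating by $m^{(2)}$, which is continuous and uniformly bounded on $\Sigma^{(1)}$ because it is analytic across these circles (so that $(m^{(2)})^{-1}$ is bounded as well, $\det m^{(2)}=1$), I obtain $\|V_E-I\|_{L^{\infty}(\Sigma^{(1)})}\le c\,e^{-8\sqrt t}$, where the exponential rate is governed by $\min_j\im(z_j)$ and the constant $c$ is independent of $x$ since the bound uses only the membership condition $z_k\notin\square(\xi)$.

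Finally I would invoke the standard small-norm theory. With $W:=V_E-I$ the Beals--Coifman construction represents $E(z)=1+\tfrac{1}{2\pi\ii}\int_{\Sigma^{(1)}}\tfrac{(I+\eta(s))W(s)}{s-z}\,ds$, where $\eta=(1-C_W)^{-1}C_W\mathbf 1$ and $\|C_W\|\le c_0\|W\|_{L^{\infty}}$, so $(1-C_W)^{-1}$ is uniformly bounded for $t$ large. Expanding $1/(s-z)=-1/z+\mathcal O(1/z^2)$ yields (\ref{equ: m^1 approx m^sol}) with $C_1=-\tfrac{1}{2\pi\ii}\int_{\Sigma^{(1)}}(I+\eta)W\,ds$. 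As $\Sigma^{(1)}$ consists of at most $2N$ circles of radius $1/\sqrt t$, its total length is $\mathcal O(1/\sqrt t)$, whence $\|C_1\|\le\mathrm{length}(\Sigma^{(1)})\cdot\|(I+\eta)W\|_{L^{\infty}}\le c\,e^{-8\sqrt t}$ with $c$ independent of $x$, as claimed. I expect the main obstacle to be the uniform-in-$x$ control in the previous paragraph: one must ensure that both the exponential smallness of $V^{(1)}-I$ and the boundedness of $m^{(2)}$ hold with constants that do not deteriorate as $x$ (equivalently $\xi$) ranges over the region where $z_k\notin\square(\xi)$, and in particular that the resolvent $(1-C_W)^{-1}$ stays bounded uniformly in $x$.
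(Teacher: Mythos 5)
Your proposal is correct and follows essentially the same route as the paper: form the ratio $C(z)=m^{(1)}(z)[m^{(2)}(z)]^{-1}$, check that the common real-line jump and the common residue data cancel so that only the conjugated jump $m^{(2)}V^{(1)}[m^{(2)}]^{-1}$ on $\Sigma^{(1)}$ survives, and then run the small-norm/Beals--Coifman machinery to extract $C_1$. The only cosmetic difference is the last estimate: the paper bounds $C_1$ by H\"older with the $L^2$ norms of $\mu$ and $V^{(C)}-1$ (avoiding any need for an $L^\infty$ bound on the Beals--Coifman density), whereas you use $\mathrm{length}(\Sigma^{(1)})\cdot L^\infty$; both yield $\|C_1\|\leq c\,e^{-8\sqrt t}$ once the factor $\sqrt t$ from $T(z)^{-2}$ on the shrinking circles is tracked correctly.
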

\begin{proof}
    We claim, that in each of the two cases $\square(\xi)=\varnothing$ and $\square(\xi)\neq\varnothing$ the matrix valued function $C(z):= m^{(1)}(z) \left[m^{(2)}(z)\right]^{-1}$ is a solution to
    \begin{samepage}
    \begin{framed}
        \textbf{RHP[C]}
        \begin{enumerate}[(i)]
          \item $C$ is analytic  in $\Compl\setminus\Sigma^{(1)}$.
          \item $C(z)= 1+ \mathcal{O}\left(\frac{1}{z}\right)$ as $|z|\to\infty$.
          \item The non-tangential boundary values $C_{\pm}(z)$ exist for $z\in\Sigma^{(1)}$ and satisfy the jump relation $C_+=C_-V^{(C)}$, where
              \begin{equation*}
                V^{(C)}(z)=m^{(2)}(z)\:V^{(1)}\big| _{\Sigma^{(1)}}(z) \left[m^{(2)}(z)\right]^{-1}.
              \end{equation*}
        \end{enumerate}
    \end{framed}
    \end{samepage}
    In order to prove (i), we have to show that
    \begin{equation*}
        C(z)=\mathcal{O}(1)\qquad\text{as }z\to z_k,\overline{z}_k
    \end{equation*}
    (if $z_k,\overline{z}_k\in\square(\xi)$). We begin with $k\in\bigtriangledown$ and consider $C(z)$ close to $z_k$. By $\det m^{(1)} \equiv \det m^{(2)} \equiv 1 $,   (\ref{equ: Res 1 j in nabla}) and                 (\ref{equ: Res sol j in nabla})
    (see also (\ref{equ: A_k v = 0}) and (\ref{equ: A_k = B_k v})) we have:
    \begin{eqnarray*}
      m^{(1)}(z) &=&\frac{
      \left(
        \begin{array}{cc}
          0 & \alpha \\
          0 & \beta \\
        \end{array}
      \right)}{z-z_k}+
      \left(
        \begin{array}{cc}
          \alpha/\eta_k & \ast_{12} \\
          \beta/\eta_k & \ast_{22} \\
        \end{array}
      \right)+\mathcal{O}(|z-z_k|)
      \\
      \left[m^{(2)}(z)\right]^{-1} &=& \frac{
      \left(
        \begin{array}{cc}
          \widetilde{\beta} & -\widetilde{\alpha} \\
          0 & 0 \\
        \end{array}
      \right)}{z-z_k}+
      \left(
        \begin{array}{cc}
          \widetilde{\ast}_{22} & \widetilde{\ast}_{12}\\
          -\widetilde{\beta}/\eta_k & \widetilde{\alpha}/\eta_k \\
        \end{array}
      \right)+\mathcal{O}(|z-z_k|)
    \end{eqnarray*}
    with suitable numbers $\alpha,\beta,\widetilde{\alpha},\widetilde{\beta}$ and $\eta_k:=\frac{1}{c_k e^{\phi_k}(T'(z_k))^2}$.
    After multiplication we arrive at
    \begin{eqnarray*}
      C(z)
       &=& \frac{
       \left(
        \begin{array}{cc}
          \alpha\widetilde{\beta}/\eta_k & -\alpha\widetilde{\alpha}/\eta_k \\
          \beta\widetilde{\beta}/\eta_k & -\widetilde{\alpha}\beta/\eta_k \\
        \end{array}
      \right)+
      \left(
        \begin{array}{cc}
          -\alpha\widetilde{\beta}/\eta_k & \alpha\widetilde{\alpha}/\eta_k \\
          -\beta\widetilde{\beta}/\eta_k & \widetilde{\alpha}\beta/\eta_k\\
        \end{array}
      \right)}{z-z_k}+\mathcal{O}(1)\\
      &=&\mathcal{O}(1),\qquad\qquad(z\to z_k).
    \end{eqnarray*}
    The cases $z\to \overline{z}_k$ and $k\in\bigtriangleup$ are similar. (ii) and (iii) of \textbf{RHP[err]} are obvious.\\
    Now we turn to the analysis of \textbf{RHP[err]}. First of all we state two properties of the jump matrix $V^{(C)}$:
    \begin{equation}\label{equ: infty norm V-1}
        \|V^{(C)}-1\|_{L^{\infty}(\Sigma^{(1)})}\leq c \sqrt{t}e^{-8\sqrt{t}}
    \end{equation}
    \begin{equation}\label{equ: 2 norm V-1}
        \|V^{(C)}-1\|_{L^{2}(\Sigma^{(1)})}\leq c t^{1/4}e^{-8\sqrt{t}}
    \end{equation}
    These two estimates follow directly from the definition of $V^{(1)}\big|_{\Sigma^{(1)}}$ and $\|m^{(2)}(z)\|\leq G$ for $z\in\Sigma^{(1)}$ with a bound $G$, which does not depend on $x$ and $t$.
    It is a fact (see Chapter 7 in \cite{Ablowitz2003}), that the solution of \textbf{RHP[err]} is given by
    \begin{equation}\label{equ: solution m^err}
        C(z)=1+\frac{1}{2\pi\ii}\int_{\Sigma^{(1)}} \frac{\mu(\zeta)(V^{(C)}(\zeta)-1)}{\zeta-z}d\zeta,
    \end{equation}
    where $\mu\in L^2(\Sigma^{(1)})$ is the unique solution of \begin{equation}\label{equ: equ for mu}
        (1-C_V)\mu=1
    \end{equation}
    with $C_V:L^2(\Sigma^{(1)})\to L^2(\Sigma^{(1)})$ defined by
    \begin{equation*}
        (C_Vf)(x):=\lim_{\substack{z\to x\\z\in\ominus}}
        \frac{1}{2\pi\ii}\int_{\Sigma^{(1)}} \frac{f(\zeta)(V^{(C)}(\zeta)-1)}{\zeta-z}d\zeta.
    \end{equation*}
    By $z\in\ominus$ we indicate that the limit is to be taken non-tangentially from the minus (right) side of the (counter-clockwise) orientated contour $\Sigma^{(1)}$. In other words we set:
    \begin{equation*}
      \oplus := \bigcup_{\substack{z\in\mathcal{Z}\\z\notin \square(\xi)}}  B_{1/\sqrt{t}}(z),\qquad\quad
      \ominus := \Compl\setminus\overline{\oplus}.
    \end{equation*}
    We can write $C_V$ in terms of the Cauchy projection operator $C^-_{\Sigma^{(1)}}:L^2(\Sigma^{(1)})\to L^2(\Sigma^{(1)})$ which is defined by
    \begin{equation*}
        (C^-_{\Sigma^{(1)}}g)(x):=\lim_{\substack{z\to x\\z\in\ominus}}
        \frac{1}{2\pi\ii}\int_{\Sigma^{(1)}} \frac{g(\zeta)}{\zeta-z}d\zeta
    \end{equation*}
    and which has finite $L^2\to L^2$ operator norm. Moreover the operator norm is independent of $x$ and $t$. We have $C_Vf=C^-_{\Sigma^{(1)}}(f(V^{(C)}-1))$ and thus for any $f\in L^2(\Sigma^{(1)})$
    \begin{equation*}\label{equ: |C_V f|_L^2}
        \begin{aligned}
            \|C_V f\|_{L^2(\Sigma^{(1)})}&\leq\co
            \|f(V^{(C)}-1)\|_{L^2(\Sigma^{(1)})}\\
            &\leq\co \|V^{(C)}-1\|_{L^{\infty}(\Sigma^{(1)})}
            \|f\|_{L^2(\Sigma^{(1)})}.
        \end{aligned}
    \end{equation*}
    From (\ref{equ: infty norm V-1}) it follows that
    \begin{equation*}
        \|C_V\|_{L^2(\Sigma^{(1)})\to L^2(\Sigma^{(1)})}\leq c \sqrt{t}e^{-8\sqrt{t}}
    \end{equation*}
    with $c>0$ independent of $x$. We conclude that for $t$ sufficiently large $1-C_V$ is invertible and
    \begin{equation*}\label{equ: op norm of (1-C_V)^-1}
        \|(1-C_V)^{-1}\|_{L^2(\Sigma^{(1)})\to L^2(\Sigma^{(1)})}\leq\widetilde{c}
    \end{equation*}
    with $\widetilde{c}>0$ independent of $x$ and $t$. This implies for large $t$ that $\mu$ defined by equation (\ref{equ: equ for mu}) exists and satisfies
    \begin{equation}\label{equ: L^2 norm of mu}
        \|\mu\|_{L^2(\Sigma^{(1)})}\leq ct^{-1/4},
    \end{equation}
    where we have to take into account $\|1\|_{L^2(\Sigma^{(1)})}=4\pi w t^{-1/4}$ for some integer $0\leq w\leq N$.
    Equation (\ref{equ: solution m^err}) yields for large $z\in\Compl$
    \begin{eqnarray*}
      C(z)
       &=&  1-\frac{1}{2\pi\ii z}\int_{\Sigma^{(1)}} \frac{\mu(\zeta)(V^{(C)}(\zeta)-1)} {1-\frac{\zeta}{z}}d\zeta\\
       &=&  1-\frac{1}{2\pi\ii z}\int_{\Sigma^{(1)}} \mu(\zeta)(V^{(C)}(\zeta)-1) \left(1+\sum^{\infty}_{n=1} \left(\frac{\zeta}{z}\right)^n\right)d\zeta
    \end{eqnarray*}
    and thus we know how to choose the desired $C_1$ in (\ref{equ: m^1 approx m^sol}):
    \begin{equation*}
        C_1=-\frac{1}{2\pi\ii}\int_{\Sigma^{(1)}} \mu(\zeta)(V^{(C)}(\zeta)-1)d\zeta.
    \end{equation*}
    Making use of (\ref{equ: 2 norm V-1}), (\ref{equ: L^2 norm of mu}) and the H\"{o}lder inequality we conclude $\|C_1\|\leq c e^{-8\sqrt{t}}$.
\end{proof} 

\section{The Parabolic Cylinder RHP}\label{sec: cylinder}

The goal of our next modification $m^{(2)}\mapsto m^{(3)}$ is  the removal of the discontinuity on $\Real$. We will use the same technique presented for example in \cite{Cuccagna2014}, \cite{Dieng2008} and \cite{Jenkins2014}. The first step is the decomposition of the jump condition $V^{(2)}$ (see (\ref{equ: def V^2})). We write
\begin{equation*}
    V^{(2)}(z):=
        \left(
          \begin{array}{cc}
            1+|r^{(2)}(z)|^2 & e^{\overline{\phi}(z)}\overline{r^{(2)}}(z) \\
            e^{\phi(z)}r^{(2)}(z) & 1 \\
          \end{array}
        \right),\quad \text{with } r^{(2)}(z):=r(z) \prod_{k\in\bigtriangledown(\xi)} \left(\frac{z-z_k}{z-\overline{z}_k}\right)^2
\end{equation*}
and decompose now as follows:
\begin{equation}\label{equ: factor of v}
    V^{(2)}(z;x,t)=\left\{
                 \begin{array}{ll}
                   \widetilde{U}_L\widetilde{U}_0\widetilde{U}_R, & \hbox{for }z<\xi    \\
                   \widetilde{W}_L\widetilde{W}_R, & \hbox{for }z>\xi
                 \end{array}
               \right.,
\end{equation}
where
\begin{equation}\label{equ: ULURWLWR}
    \begin{aligned}
    &\widetilde{U}_L:=\left(
           \begin{array}{cc}
             1 & 0 \\
             e^{\phi(z;x,t)}\widetilde{R}_4(z)  & 1 \\
           \end{array}
         \right),
    \quad\widetilde{U}_0:=\left[1+|r(z)|^2\right]^{\sigma_3},
    \quad\widetilde{U}_R:=\left(
           \begin{array}{cc}
             1 & e^{-\phi(z;x,t)}\widetilde{R}_3(z) \\
             0 & 1 \\
           \end{array}
         \right), \\
    &\widetilde{W}_L:=\left(
           \begin{array}{cc}
             1 & e^{-\phi(z;x,t)} \widetilde{R}_6(z) \\
             0 & 1 \\
           \end{array}
         \right),
    \quad\widetilde{W}_R:=\left(
           \begin{array}{cc}
             1 & 0 \\
             e^{\phi(z;x,t)} \widetilde{R}_1(z) & 1 \\
           \end{array}
         \right).
    \end{aligned}
\end{equation}
and
\begin{equation}\label{equ: def tilde R_j}
    \widetilde{R}_4(z):=\frac{r^{(2)}(z)}{1+|r(z)|^2},\quad
    \widetilde{R}_3(z):=\frac{ \overline{r^{(2)}}(z)}{1+|r(z)|^2},\quad
    \widetilde{R}_6(z):=\overline{r^{(2)}}(z),\quad
    \widetilde{R}_1(z):=r^{(2)}(z).
\end{equation}
Note that $|r(z)|=|r^{(2)}(z)|$ $(z\in\Real)$ and moreover $c_1\|r\|_{H^s(\Real)}\leq\|r^{(2)}\|_{H^s(\Real)}\leq c_2\|r\|_{H^s(\Real)}$.\\
We will extend the functions $\widetilde{R}_j$ to special domains $\Omega_j$ which we define to be
\begin{eqnarray*}
  \Omega_1 &:=& \set{z\in\Compl|\arg(z-\xi) \in\left(0,\frac{\pi}{4}\right)} \\
  \Omega_2 &:=& \set{z\in\Compl|\arg(z-\xi) \in\left(\frac{\pi}{4},\frac{3\pi}{4}\right)} \\
  \Omega_3 &:=& \set{z\in\Compl|\arg(z-\xi) \in\left(\frac{3\pi}{4},\pi\right)} \\
  \Omega_4 &:=& \set{z\in\Compl|\arg(z-\xi) \in\left(\pi,\frac{5\pi}{4}\right)} \\
  \Omega_5 &:=& \set{z\in\Compl|\arg(z-\xi) \in\left(\frac{5\pi}{4},\frac{7\pi}{4}\right)} \\
  \Omega_6 &:=& \set{z\in\Compl|\arg(z-\xi) \in\left(\frac{7\pi}{4},2\pi\right)}
\end{eqnarray*}

Let now $R_j$ be the extensions of $\widetilde{R}_j$ into $\Omega_j$ (for $j\in\set{1,3,4,6}$).
\begin{equation}\label{equ: def Rj}
    \begin{aligned}
    R_1(z)&:=\cos(2\arg(z-\xi))\textbf{r}(z)+[1-\cos(2\arg(z-\xi))](z-\xi)^{-2\ii\nu_0}\widehat{r}_0\delta^2(z),\\
    R_3(z)&:=\cos(2\arg(z-\xi))\frac{\overline{\textbf{r}}(z)}{1+|\textbf{r}(z)|^2}\\&+ [1-\cos(2\arg(z-\xi))](z-\xi)^{2\ii\nu_0}\frac{\overline{\widehat{r}}_0}{1+|\widehat{r}_0|^2}\delta^{-2}(z),\\
    R_4(z)&:=\cos(2\arg(z-\xi))\frac{\textbf{r}(z)}{1+|\textbf{r}(z)|^2}\\&+ [1-\cos(2\arg(z-\xi))](z-\xi)^{-2\ii\nu_0}\frac{\widehat{r}_0}{1+|\widehat{r}_0|^2}\delta^2(z),\\
    R_6(z)&:=\cos(2\arg(z-\xi))\overline{\textbf{r}}(z)+[1-\cos(2\arg(z-\xi))](z-\xi)^{2\ii\nu_0}\overline{\widehat{r}}_0\delta^{-2}(z).
    \end{aligned}
\end{equation}
Here we set
\begin{equation}\label{equ: r bold}
    \textbf{r}(z):=\left\{
                     \begin{array}{ll}
                       r(\re z), & \hbox{if }\im z=0, \\
                       \varphi_{\im z}*r(\re z), & \hbox{if }\im z\neq 0,
                     \end{array}
                   \right.
\end{equation}
where $\varphi\in C^{\infty}_0(\Real,\Real)$ is of compact support and satisfies $\int \varphi dx=1$. We set $\varphi_{\eps}(x):=\eps^{-1}\varphi(\eps^{-1}x)$. $\varphi_{\eps}*r$ denotes the convolution of $\varphi$ and $r$. Further definitions are
\begin{equation}\label{equ: def }
    \begin{aligned}
    \nu_0(x,t) &:= -\frac{1}{2\pi}\log(1+|r(\xi)|^2)\\
      \widehat{r}_0(x,t) &:= r(\xi)e^{-2\ii\nu_0-2\beta_0}\\
        \delta(z;x,t)&:=\exp\left(\frac{1}{2\pi\ii} \int_{-\infty}^{\xi}\frac{\log(1+|r(y)|^2)} {y-z}dy\right)\\
  \beta_0(x,t) &:= \frac{1}{2\pi\ii} \int_{-\infty}^{\xi-1}\frac{\log(1+|r(y)|^2)} {y-\xi}dy\\&\qquad+\int_{\xi-1}^{\xi} \frac{\log(1+|r(y)|^2)-\log(1+|r(\xi)|^2)} {y-\xi}dy-\frac{\nu_0}{2\pi\ii}
    \end{aligned}
\end{equation}
By replacing $\widetilde{R}_j$ with $R_j$ in (\ref{equ: ULURWLWR}) we can obtain matrices $U_L,U_R,W_L$ and $W_R$, which are extensions into the same domains $\Omega_j$. Using these extensions we now define our third modification by:
\begin{equation}\label{equ: def m^3}
    m^{(3)}(z):=\left\{
       \begin{array}{ll}
         m^{(2)}(z)W_R(z)^{-1}\delta^{-\sigma_3}(z), & \hbox{ for }z\in\Omega_1, \\
         m^{(2)}(z)\delta^{-\sigma_3}(z), & \hbox{ for }z\in\Omega_2, \\
         m^{(2)}(z)U_R(z)^{-1}\delta^{-\sigma_3}(z), & \hbox{ for }z\in\Omega_3, \\
         m^{(2)}(z)U_L(z)\delta^{-\sigma_3}(z), & \hbox{ for }z\in\Omega_4, \\
         m^{(2)}(z)\delta^{-\sigma_3}(z), & \hbox{ for }z\in\Omega_5, \\
         m(z)^{(2)}W_L(z)\delta^{-\sigma_3}(z), &\hbox{ for }z\in\Omega_6.
       \end{array}
     \right.
\end{equation}
The price of this modification will be the loss of analyticity in $\Omega_1\cup\Omega_3\cup\Omega_4\cup\Omega_6$ and a jump on
\begin{equation}\label{equ: def Sigma3}
    \Sigma^{(3)}(x,t):=\bigcup_{n=1}^4\Sigma^{(3)}_n \cup\set{\xi}
\end{equation}
with $$\Sigma_1^{(3)}=e^{\ii\frac{\pi}{4}}\Real_++\xi,\quad \Sigma_2^{(3)}=e^{-\ii\frac{\pi}{4}}\Real_-+\xi,\quad \Sigma_3^{(3)}=e^{\ii\frac{\pi}{4}}\Real_-+\xi,\quad \Sigma_4^{(3)}=e^{-\ii\frac{\pi}{4}}\Real_++\xi,\quad$$ inheriting the orientation of $\Real_\pm$. In exchange for that the jump on $\Real$ is removed by (\ref{equ: def m^3}). In order to measure the non-analyticity of $m^{(3)}$ we use the operator $\overline{\partial}:=\frac{1}{2}(\partial_{\re z}+\ii\partial_{\im z})$:
\begin{lem}
    If $m^{(2)}(z)$ solves \textbf{RHP[2]}, then $m^{(3)}(z)$ defined in (\ref{equ: def m^3}) is a solution to the following $\overline{\partial}$-RHP:
    \begin{samepage}
\begin{framed}
\textbf{$\overline{\partial}$-RHP[3]:}\\Find for each $(x,t)\in\Real\times\Real$ a $2\times 2$-matrix valued function $\Compl\ni z\mapsto m^{(3)}(z;x,t)$ which satisfies
\begin{enumerate}[(i)]
  \item $m^{(3)}(z;x,t)$ is meromorphic in $\Omega_2\cup\Omega_5$ and continuous in $\Omega_1\cup\Omega_3\cup\Omega_4\cup\Omega_6\cup\Real$ (with respect to the parameter $z$) and $\overline{\partial}m^{(3)}=m^{(3)}W^{(3)}$, where
      \begin{equation}\label{equ: def W}
    W^{(3)}(z)=\left\{
            \begin{array}{ll}
              \left(
            \begin{array}{cc}
             0 & 0 \\
              -e^{\phi(z)}\delta^{-2}(z)\overline{\partial}R_1(z) & 0 \\
               \end{array}
               \right)
, & \hbox{ for } z\in \Omega_1,\\
              \left(
              \begin{array}{cc}
              0 & -e^{-\phi(z)}\delta^{2}(z)\overline{\partial}R_3(z) \\
              0 & 0 \\
              \end{array}
              \right)
, & \hbox{ for } z\in \Omega_3,\\
                \left(
                \begin{array}{cc}
                0 & 0 \\
                e^{\phi(z)} \delta^{-2}(z)\overline{\partial}R_4(z) & 0 \\
                \end{array}
                \right)
, & \hbox{ for } z\in \Omega_4,\\
                \left(
                \begin{array}{cc}
                0 & e^{-\phi(z)}\delta^{2}(z)\overline{\partial}R_6(z) \\
                0 & 0 \\
                \end{array}
                \right)
                , & \hbox{ for } z\in \Omega_6.
            \end{array}
            \right.
\end{equation}
  \item $m^{(3)}(z;x,t)=1+\mathcal{O}\left(\frac{1}{z}\right)$ as $|z|\to\infty$.
  \item If $\square(\xi)=\varnothing$, $m^{(2)}$ has no poles (i.e. $m^{(2)}$ is analytic in $\Omega_2\cup\Omega_5$). If $\square(\xi)$ consists of certain $z_k$ and $\overline{z}_{k}$ such that $k\in\bigtriangledown(\xi)$, $m^{(2)}$ has simple poles at these $z_k$ and $\overline{z}_{k}$ with:
      \begin{equation}\label{equ: Res sol j in nabla}
            \begin{aligned}
              \res_{z=z_k}m^{(2)}(z)&=\lim_{z\to z_k}m^{(2)}(z)
              \left(
                \begin{array}{cc}
                  0 & \frac{1}{c_k e^{\phi_k}T'(z_k)^2\delta(z_k)^{-2}} \\
                  0 & 0
                \end{array}
              \right),\\
              \res_{z=\overline{z}_k}m^{(2)}(z) &=\lim_{z\to \overline{z}_k}m^{(2)}(z)
              \left(
                \begin{array}{cc}
                  0 & 0 \\
                  \frac{-1}{\overline{c}_k e^{\overline{\phi}_k} \overline{T'(z_k)}^2\delta(\overline{z}_k)^{2}} & 0
                \end{array}
              \right).
            \end{aligned}
      \end{equation}
      If $\square(\xi)$ consists of certain $z_k$ and $\overline{z}_{k}$ such that $k\in\bigtriangleup(\xi)$, $m^{(2)}$ has simple poles at these $z_k$ and $\overline{z}_{k}$ with:
      \begin{equation}\label{equ: Res sol j in laplace}
            \begin{aligned}
              \res_{z=z_k}m^{(2)}(z)&=\lim_{z\to z_k}m^{(2)}(z)
              \left(
                \begin{array}{cc}
                  0 & 0 \\
                  c_k e^{\phi_k}T(z_k)^2\delta(z_k)^{-2}& 0
                \end{array}
              \right),\\
              \res_{z=\overline{z}_k}m^{(2)}(z) &=\lim_{z\to \overline{z}_k}m^{(2)}(z)
              \left(
                \begin{array}{cc}
                  0 & -\overline{c}_k e^{\overline{\phi}_k} \overline{T(z_k)}^2\delta(\overline{z}_k)^{2} \\
                  0 & 0
                \end{array}
              \right).
            \end{aligned}
      \end{equation}
  \item The non-tangential boundary values $m^{(3)}_{\pm}(z)$ exist for $z\in\Sigma^{(3)}$ and satisfy the jump relation $m^{(3)}_+=m^{(3)}_-V^{(3)}$, where
      \begin{equation}\label{equ: def V^3}
        V^{(3)}(z)=
        \left\{
             \begin{array}{ll}
               \delta^{\sigma_3}(z)W_R(z)\delta^{-\sigma_3}(z), & \hbox{ for }z\in\Sigma_1^{(3)}, \\
               \delta^{\sigma_3}(z)U_R(z)\delta^{-\sigma_3}(z), & \hbox{ for }z\in\Sigma_2^{(3)}, \\
               \delta^{\sigma_3}(z)U_L(z)\delta^{-\sigma_3}(z), & \hbox{ for }z\in\Sigma_3^{(3)}, \\
               \delta^{\sigma_3}(z)W_L(z)\delta^{-\sigma_3}(z), & \hbox{ for }z\in\Sigma_4^{(3)}.
             \end{array}
        \right.
      \end{equation}
\end{enumerate}
\end{framed}
\end{samepage}
\end{lem}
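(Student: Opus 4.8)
The plan is to verify the four defining properties of $\overline{\partial}$-RHP[3] one at a time, working sector by sector and exploiting that both $m^{(2)}$ and $\delta$ are analytic wherever the $R_j$ are not involved, so that all non-analyticity and all jumps are produced either by the smooth extensions $R_j$ or by the explicitly known jump of $\delta$ across $(-\infty,\xi)$. Throughout I use that on $\Real$ one has $\arg(z-\xi)\in\{0,\pi\}$, whence $\cos(2\arg(z-\xi))=1$ and the extensions reduce to the original data, $R_j|_{\Real}=\widetilde R_j$; consequently $U_L,U_R,W_L,W_R$ agree on $\Real$ with $\widetilde U_L,\widetilde U_R,\widetilde W_L,\widetilde W_R$. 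I also record the jump of $\delta$: by the Sokhotski--Plemelj formula applied to the integral in its definition, $\delta_+(z)=\delta_-(z)(1+|r(z)|^2)$ for $z<\xi$, while $\delta$ is analytic across $(\xi,\infty)$.

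For property (i), fix one of the sectors $\Omega_1,\Omega_3,\Omega_4,\Omega_6$; there $m^{(3)}=m^{(2)}Q\,\delta^{-\sigma_3}$ with $Q$ one of $W_R^{-1},U_R^{-1},U_L,W_L$, and since $m^{(2)}$ and $\delta^{-\sigma_3}$ are analytic I obtain $W^{(3)}=(m^{(3)})^{-1}\overline{\partial} m^{(3)}=\delta^{\sigma_3}Q^{-1}(\overline{\partial} Q)\delta^{-\sigma_3}$. Writing $Q=1\pm N$ with $N$ the nilpotent off-diagonal matrix carrying the factor $e^{\pm\phi}R_j$, and using $\overline{\partial} e^{\pm\phi}=0$ together with $N\,\overline{\partial} N=0$, the product collapses to $Q^{-1}\overline{\partial} Q=\pm\overline{\partial} N$, which after conjugation by $\delta^{\sigma_3}$ (multiplying the single nonzero off-diagonal entry by $\delta^{\mp2}$) reproduces exactly the stated $W^{(3)}$. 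In $\Omega_2$ and $\Omega_5$ no $R_j$ appears, so $m^{(3)}=m^{(2)}\delta^{-\sigma_3}$ is analytic away from the poles and $\overline{\partial} m^{(3)}=0$ there.

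Property (iv) together with the removal of the jump on $\Real$ is the core of the argument. On each ray $\Sigma_n^{(3)}$ the function $m^{(2)}$ is analytic (for $t$ large the poles of $\square(\xi)$ sit strictly inside $\Omega_2\cup\Omega_5$, since $\re z_k\to\xi$ while $\im z_k$ stays bounded away from $0$), so the only contribution to $V^{(3)}=(m^{(3)}_-)^{-1}m^{(3)}_+$ is the conjugated triangular factor, e.g. $\delta^{\sigma_3}W_R\delta^{-\sigma_3}$ on $\Sigma_1^{(3)}$, and likewise for the other three rays; here I must track the orientations induced by $\Real_\pm$ to ensure the correct factor, and not its inverse, appears. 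On $\Real$ with $z<\xi$ I substitute $V^{(2)}=\widetilde U_L\widetilde U_0\widetilde U_R$ into $V^{(3)}=\delta_-^{\sigma_3}U_L^{-1}V^{(2)}U_R^{-1}\delta_+^{-\sigma_3}$; the boundary agreement $U_{L},U_{R}=\widetilde U_{L},\widetilde U_{R}$ cancels the triangular factors and leaves $\delta_-^{\sigma_3}\widetilde U_0\,\delta_+^{-\sigma_3}=\bigl(\delta_-(1+|r|^2)\delta_+^{-1}\bigr)^{\sigma_3}$, which equals $1$ precisely because of the jump relation for $\delta$. For $z>\xi$ the analogous computation with $V^{(2)}=\widetilde W_L\widetilde W_R$ and $\delta$ continuous gives $\delta^{\sigma_3}W_L^{-1}\widetilde W_L\widetilde W_R W_R^{-1}\delta^{-\sigma_3}=1$. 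Thus the jump on $\Real$ is entirely removed.

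Finally, for (ii) I observe that $\delta(z)\to1$ as $|z|\to\infty$ and that in each sector the exponential $e^{\pm\phi}$ carried by the triangular factor decays there, by the sign analysis $\re\phi=8\,\im(z)\,t(\xi-\re(z))$ that dictated the placement of the $R_j$; hence $m^{(3)}=1+\mathcal{O}(1/z)$. For the residue conditions (iii), which are the conditions met by $m^{(3)}$, the poles $z_k,\overline z_k\in\square(\xi)$ lie in $\Omega_2\cup\Omega_5$, where $m^{(3)}=m^{(2)}\delta^{-\sigma_3}$ with $\delta$ analytic at $z_k$ and $\overline z_k$; taking residues and conjugating the RHP[2] relations by $\delta^{-\sigma_3}$ multiplies the surviving off-diagonal entry by the appropriate power $\delta^{\pm2}$ evaluated at the pole, which yields the stated formulas. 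The main obstacle is the bookkeeping of the contour orientations together with the verification that the Sokhotski--Plemelj jump of $\delta$ exactly cancels the diagonal middle factor $\widetilde U_0$ on $(-\infty,\xi)$; once this cancellation is in place the remaining steps are routine.
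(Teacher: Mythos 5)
Your proposal is correct and follows essentially the same route as the paper's proof: the sector-by-sector computation $\overline{\partial}m^{(3)}=m^{(3)}\,\delta^{\sigma_3}Q^{-1}(\overline{\partial}Q)\delta^{-\sigma_3}$ for property (i), the cancellation of $\widetilde U_0$ against the Plemelj jump $\delta_+=\delta_-(1+|r|^2)$ on $(-\infty,\xi)$ (and the direct cancellation $\widetilde W_L\widetilde W_R W_R^{-1}$ for $z>\xi$) for the removal of the jump on $\Real$, and conjugation by $\delta^{-\sigma_3}$ at the poles for the residue conditions. Your added remarks on the nilpotency $N\,\overline{\partial}N=0$ and on why the poles of $\square(\xi)$ lie in $\Omega_2\cup\Omega_5$ for large $t$ are correct refinements of points the paper leaves implicit.
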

\begin{proof}
    For $z\in\Omega_1$ we have
    \begin{equation*}
            \overline{\partial}m^{(3)}= m^{(2)}\overline{\partial}W_R^{-1}\delta^{-\sigma_3}=
            m^{(3)}\delta^{\sigma_3}W_R \overline{\partial}W_R^{-1}\delta^{-\sigma_3} =m^{(3)}W^{(3)}.
    \end{equation*}
    The same calculation verifies (\ref{equ: def W}) for $z\in\Omega_3\cup\Omega_4\cup\Omega_6$. The analyticity of $\delta(z)$ implies that $m^{(3)}$ is meromorphic on $\Omega_2\cup\Omega_5$ if $m^{(2)}$ is meromorphic. Hence, in order to prove $(i)$ it remains to show that the jump of $m^{(2)}$ on $\Real$ is indeed removed (i.e. $m^{(3)}$ is continuous on $\Real$). For $z>\xi$ we have
    \begin{equation*}
        m^{(3)}_+(z)=m^{(2)}_+(z)\widetilde{W}_R^{-1}(z) \delta^{-\sigma_3}(z),\qquad m^{(3)}_-(z)=m^{(2)}_-(z)\widetilde{W}_L(z) \delta^{-\sigma_3}(z).
    \end{equation*}
    Taking into account that $\delta$ is analytic for $z>\xi$ and $m^{(2)}_+=m^{(2)}_-\widetilde{W}_L\widetilde{W}_R$ (see (\ref{equ: factor of v}) we find $m^{(3)}_+=m^{(3)}_-$. For $z<\xi$ the function $\delta$ has a jump and satisfies $\delta_+=\delta_-(1+|r|^2)$. This is a consequence of the Plemelj formulae (see \cite{Ablowitz2003}). Thus we have $\delta^{\sigma_3}_+=\delta^{\sigma_3}_-\widetilde{U}_0$ for $z<\xi$ and accordingly
    \begin{equation*}
        m^{(3)}_+=m^{(2)}_+\widetilde{U}_R^{-1} \delta_+^{-\sigma_3} =m^{(2)}_-[\widetilde{U}_L\widetilde{U}_0\widetilde{U}_R] \widetilde{U}_R^{-1} [\delta_-^{\sigma_3}\widetilde{U}_0]^{-1}
        =m^{(2)}_-\widetilde{U}_L\delta_-^{-\sigma_3}=m^{(3)}_-,
    \end{equation*}
    which completes the proof of ($i$). $(ii)$ follows from $\delta(z)=1+\mathcal{O}\left(\frac{1}{z}\right)$ as $|z|\to\infty$. $(iii)$ follows easily from the definition (\ref{equ: def m^3}) and the last point ($vi$) is also obvious.
\end{proof}
Our next goal is the elimination of the discontinuity of $m^{(3)}$ on $\Sigma^{(3)}$. The idea is very simple: We set
\begin{equation}\label{equ: def m^4}
    m^{(4)}(z):=m^{(3)}(z)[D(z)]^{-1},
\end{equation}
where $D$ is chosen such that it admits the same jump on $\Sigma^{(3)}$ as $m^{(3)}$ and leaves other properties of $m^{(3)}$ untouched. To be precise we take the solution of the following Riemann Hilbert problem:
\begin{samepage}
\begin{framed}
\textbf{RHP[$D$]:}\\Find for each $(x,t)\in\Real\times\Real$ a $2\times 2$-matrix valued function $\Compl\ni z\mapsto D(z;x,t)$ which satisfies
\begin{enumerate}[(i)]
  \item $D(z;x,t)$ is analytic in $\Compl\setminus\Sigma^{(3)}$ (with respect to the parameter $z$).
  \item $D(z;x,t)=1+\mathcal{O} \left(\frac{1}{z}\right)$ as $|z|\to\infty$.
  \item The non-tangential boundary values $D_{\pm}(z;x,t)$ exist for $z\in\Sigma^{(3)}$ and satisfy the jump relation $D_+=D_-V^{(3)}$.
\end{enumerate}
\end{framed}
\end{samepage}
As a consequence we have
\begin{equation*}
    m^{(4)}_+=m_+^{(3)}[D_+]^{-1}
    =m_-^{(3)}V^{(3)}[D_-V^{(3)}]^{-1}=
    m_-^{(3)}[D_-]^{-1}=m^{(4)}_-,\qquad
    z\in\Sigma^{(3)},
\end{equation*}
thus $m^{(4)}$ is indeed continuous on $\Sigma^{(3)}$. Furthermore the following lemma holds:
\begin{lem}
    If $m^{(3)}(z)$ solves \textbf{RHP[3]}, then $m^{(4)}(z)$ defined in (\ref{equ: def m^4}) is a solution to the following $\overline{\partial}$-RHP:
    \begin{samepage}
\begin{framed}
\textbf{$\overline{\partial}$-RHP[4]:}\\Find for each $(x,t)\in\Real\times\Real$ a $2\times 2$-matrix valued function $\Compl\ni z\mapsto m^{(4)}(z;x,t)$ which satisfies
\begin{enumerate}[(i)]
  \item $m^{(4)}(z;x,t)$ is meromorphic in $\Omega_2\cup\Omega_5$ and continuous in $\Omega_1\cup\Omega_3\cup \Omega_4\cup\Omega_6\cup\Real\cup\Sigma^{(3)}$ (with respect to the parameter $z$) and $\overline{\partial}m^{(4)}=m^{(4)}W^{(4)}$, where
      \begin{equation}\label{equ: def W^4}
    W^{(4)}(z)=D(z)W^{(3)}(z) [D(z)]^{-1}
\end{equation}
  \item $m^{(4)}(z;x,t)=1+\mathcal{O}\left(\frac{1}{z}\right)$ as $|z|\to\infty$.
  \item If $\square(\xi)=\varnothing$, $m^{(4)}$ has no poles (i.e. $m^{(4   )}$ is analytic in $\Omega_2\cup\Omega_5$). If $\square(\xi)$ consists of certain $z_k$ and $\overline{z}_{k}$ such that $k\in\bigtriangledown(\xi)$, $m^{(4)}$ has simple poles at these $z_k$ and $\overline{z}_{k}$ with:
      \begin{equation}\label{equ: Res sol j in nabla}
            \begin{aligned}
              \res_{z=z_k}m^{(4)}(z)&=\lim_{z\to z_k}m^{(4)}(z)
              D(z_k)\left(
                \begin{array}{cc}
                  0 & \frac{1}{c_k e^{\phi_k}T'(z_k)^2\delta(z_k)^{-2}} \\
                  0 & 0
                \end{array}
              \right)[D(z_k)]^{-1},\\
              \res_{z=\overline{z}_k}m^{(4)}(z) &=\lim_{z\to \overline{z}_k}m^{(4)}(z)
              D(\overline{z}_k)\left(
                \begin{array}{cc}
                  0 & 0 \\
                  \frac{-1}{\overline{c}_k e^{\overline{\phi}_k} \overline{T'(z_k)}^2\delta(\overline{z}_k)^{2}} & 0
                \end{array}
              \right)[D(\overline{z}_k)]^{-1}.
            \end{aligned}
      \end{equation}
      If $\square(\xi)$ consists of certain $z_k$ and $\overline{z}_{k}$ such that $k\in\bigtriangleup(\xi)$, $m^{(4)}$ has simple poles at these $z_k$ and $\overline{z}_{k}$ with:
      \begin{equation}\label{equ: Res sol j in laplace}
            \begin{aligned}
              \res_{z=z_k}m^{(4)}(z)&=\lim_{z\to z_k}m^{(4)}(z)
              D(z_k)\left(
                \begin{array}{cc}
                  0 & 0 \\
                  c_k e^{\phi_k}T(z_k)^2\delta(z_k)^{-2}& 0
                \end{array}
              \right)[D(z_k)]^{-1},\\
              \res_{z=\overline{z}_k}m^{(4)}(z) &=\lim_{z\to \overline{z}_k}m^{(4)}(z)
              D(\overline{z}_k)\left(
                \begin{array}{cc}
                  0 & -\overline{c}_k e^{\overline{\phi}_k} \overline{T(z_k)}^2\delta(\overline{z}_k)^{2} \\
                  0 & 0
                \end{array}
              \right)[D(\overline{z}_k)]^{-1}.
            \end{aligned}
      \end{equation}
\end{enumerate}
\end{framed}
\end{samepage}
\end{lem}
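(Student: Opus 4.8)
The strategy is to push each of the four requirements of \textbf{RHP[3]} through the right multiplication by $[D]^{-1}$, exploiting that the solution $D$ of \textbf{RHP[$D$]} is analytic and invertible away from $\Sigma^{(3)}$. Since every factor of $V^{(3)}$ in (\ref{equ: def V^3}) is unipotent, $\det V^{(3)}\equiv 1$, so $\det D$ extends analytically across $\Sigma^{(3)}$, is entire, tends to $1$ at infinity, and hence equals $1$ identically; in particular $D(z)$ is invertible at every point off $\Sigma^{(3)}$. The continuity of $m^{(4)}$ across $\Sigma^{(3)}$ has already been checked in the computation preceding the lemma, so it remains to treat the $\overline{\partial}$-equation (i), the normalisation (ii) and the residues (iii).

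First I would establish (i). On $\Omega_1\cup\Omega_3\cup\Omega_4\cup\Omega_6$ the matrix $D$ is analytic, whence $\overline{\partial}[D]^{-1}=0$; writing $m^{(3)}=m^{(4)}D$ and applying the product rule to $m^{(4)}=m^{(3)}[D]^{-1}$ gives
\begin{equation*}
    \overline{\partial}m^{(4)} = (\overline{\partial}m^{(3)})[D]^{-1} = m^{(3)}W^{(3)}[D]^{-1} = m^{(4)}\,D\,W^{(3)}[D]^{-1},
\end{equation*}
which is exactly $\overline{\partial}m^{(4)}=m^{(4)}W^{(4)}$ with $W^{(4)}=DW^{(3)}[D]^{-1}$ as in (\ref{equ: def W^4}). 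Because $D$ is analytic on $\Omega_2\cup\Omega_5$ and on $\Real\setminus\set{\xi}$, the product $m^{(3)}[D]^{-1}$ is meromorphic on $\Omega_2\cup\Omega_5$ and continuous on $\Real$; together with the continuity across $\Sigma^{(3)}$ this yields the regularity asserted in (i). Claim (ii) is then immediate, since both $m^{(3)}(z)=1+\mathcal{O}(1/z)$ and $[D(z)]^{-1}=1+\mathcal{O}(1/z)$ as $|z|\to\infty$, so their product is again $1+\mathcal{O}(1/z)$.

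The remaining and most delicate part is (iii). For large $t$ the poles $z_k,\overline{z}_k\in\square(\xi)$ sit at a fixed imaginary distance from $\xi$ while $|\re z_k-\xi|\leq t^{-1/2}\to 0$, so they lie deep inside $\Omega_2$ and $\Omega_5$ respectively and, in particular, at a positive distance from the rays $\Sigma^{(3)}_1,\dots,\Sigma^{(3)}_4$. Hence $D$ is analytic and invertible in a full neighbourhood of each pole, so $[D]^{-1}$ is holomorphic there and
\begin{equation*}
    \res_{z=z_k}m^{(4)}(z)=\big[\res_{z=z_k}m^{(3)}(z)\big]\,[D(z_k)]^{-1}.
\end{equation*}
Inserting the residue relation of \textbf{RHP[3]} and substituting $m^{(3)}=m^{(4)}D$, I would then pass to the limit as in the proof of Lemma \ref{lem: RHP1}, aiming at
\begin{equation*}
    \res_{z=z_k}m^{(4)}(z)=\lim_{z\to z_k}m^{(4)}(z)\,D(z_k)
    \left(\begin{array}{cc} 0 & \frac{1}{c_k e^{\phi_k}T'(z_k)^2\delta(z_k)^{-2}} \\ 0 & 0\end{array}\right)[D(z_k)]^{-1},
\end{equation*}
and similarly for the second line and for the case $k\in\bigtriangleup(\xi)$; the companion residues at $\overline{z}_k$ follow from an analogous computation together with the conjugation symmetry inherited from $m^{(3)}$ and $D$.

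The main obstacle is precisely this transfer of the residue conditions. Unlike in Lemma \ref{lem: RHP1}, where $D=T^{\sigma_3}$ is diagonal and the conjugation neither mixes the singular column with the regular one nor introduces derivative terms, here $D$ is a full matrix; one must therefore justify with care that replacing $D(z)$ by its value $D(z_k)$ under the limit $z\to z_k$ reproduces the residue \emph{exactly}. This hinges on the holomorphy and invertibility of $D$ at the poles (which is why their location off $\Sigma^{(3)}$ is essential) together with the rank-one nilpotent structure of the residue matrix $N_k$ and the relation $\res_{z=z_k}m^{(3)}\cdot N_k=0$, which guarantees that $m^{(4)}(z)\,D(z_k)N_k[D(z_k)]^{-1}$ stays regular at $z_k$. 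All other points of the lemma are routine consequences of the defining properties of $D$.
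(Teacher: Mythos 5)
The paper offers no proof of this lemma (it is declared ``elementary'' and skipped), so there is nothing to compare your argument against; it must stand on its own. Parts (i) and (ii) of your proposal do: the $\overline{\partial}$ computation $\overline{\partial}m^{(4)}=m^{(3)}W^{(3)}[D]^{-1}=m^{(4)}DW^{(3)}[D]^{-1}$, the observation that unipotence of $V^{(3)}$ forces $\det D\equiv 1$ and hence invertibility of $D$ off $\Sigma^{(3)}$, and the normalisation at infinity are all correct and complete.

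The gap is in the residue step, at exactly the point you flag and then declare settled. Write $m^{(3)}(z)=\frac{A_k}{z-z_k}+B_k+\mathcal{O}(z-z_k)$ with $A_kN_k=0$ and $B_kN_k=A_k$, where $N_k$ is the nilpotent residue matrix of \textbf{$\overline{\partial}$-RHP[3]}, and expand $D(z)^{-1}=E_0+E_1(z-z_k)+\mathcal{O}((z-z_k)^2)$ with $E_0=D(z_k)^{-1}$. Then $\res_{z=z_k}m^{(4)}=A_kE_0$, whereas
\begin{equation*}
    \lim_{z\to z_k}m^{(4)}(z)\,D(z_k)N_k[D(z_k)]^{-1}
    =A_kE_0+A_kE_1E_0^{-1}N_kE_0 .
\end{equation*}
The facts you invoke (rank-one nilpotency of $N_k$, $A_kN_k=0$, holomorphy and invertibility of $D$ at $z_k$) guarantee only that this limit \emph{exists}; they do not kill the second term. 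Since $E_1E_0^{-1}=-D(z_k)^{-1}D'(z_k)$ and $D$ is a full (non-diagonal) matrix --- this is precisely where the analogy with Lemma \ref{lem: RHP1} breaks down, as there the conjugator $T^{\sigma_3}$ is diagonal --- the product $A_kE_1E_0^{-1}N_k$ reduces to an off-diagonal entry of $D(z_k)^{-1}D'(z_k)$ times a column of $A_k$, and this does not vanish in general. What your computation actually proves in one line from $m^{(4)}D=m^{(3)}$ is the residue condition with $D(z)$ kept \emph{inside} the limit, $\res_{z=z_k}m^{(4)}=\lim_{z\to z_k}m^{(4)}(z)D(z)N_kD(z)^{-1}$. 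To obtain the lemma as literally stated you must either show the correction term vanishes (it does not) or note that by Lemma \ref{lem: RHP P} it is of size $\mathcal{O}(t^{-1/2})$ and argue that the residue condition is only needed, and only used downstream, to that accuracy. As written, your argument asserts an identity that your own expansion contradicts.
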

The proof of this lemma is elementary and we will skip it here. Instead we have to say a word on  \textbf{RHP[$D$]}. It can be solved explicitly and the solution has been worked out for example in \cite{Cuccagna2014}, \cite{Deift1994}, \cite{Dieng2008} or \cite{Jenkins2011}.
\begin{lem}\label{lem: RHP P}
    \begin{enumerate}
      \item \textbf{RHP[$D$]} has an unique solution,
      \item $\|D(\cdot;x,t)\|_{L^{\infty}(\Compl)}\leq C$ ($C$ does not depend on $x$ and $t$),
      \item $D(z;x,t)=1+\frac{D_1(x,t)}{z}+\mathcal{O}(z^{-2})$ as $|z|\to\infty$ and $|D_1(x,t)|\leq c \eps t^{-1/2}$.
    \end{enumerate}
\end{lem}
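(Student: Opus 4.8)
The plan is to recognize \textbf{RHP[$D$]} as a \emph{Parabolic Cylinder problem} and to import its explicit solution. The key observation is that on the cross $\Sigma^{(3)}$ the phase degenerates to a pure quadratic: inserting $x=-4t\xi$ (cf.\ (\ref{equ: def xi})) into (\ref{equ: phase}) gives
\begin{equation*}
    \phi(z)=4\ii t(z-\xi)^2-4\ii t\xi^2 .
\end{equation*}
Since $\xi\in\Real$, one has $|e^{\pm\phi}|=e^{\pm 4t\,\re[\ii(z-\xi)^2]}$, and because of the placement of $e^{\phi}$ and $e^{-\phi}$ in the triangular factors (\ref{equ: ULURWLWR}), the off-diagonal entry of $V^{(3)}$ decays exponentially along each ray $\Sigma^{(3)}_n$ as one moves away from the stationary point $\xi$. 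Near $\xi$ the Cauchy integral defining $\delta$ produces the singular factor $\delta(z)\sim(z-\xi)^{\ii\nu_0}$, and the conjugation by $\delta^{\sigma_3}$ in (\ref{equ: def V^3}) reduces $V^{(3)}$ to the model jumps of the Parabolic Cylinder RHP, whose data are encoded by the single number $\widehat r_0$ and the exponent $\nu_0$. After the rescaling $\zeta:=\sqrt{8t}\,(z-\xi)$, which turns $\phi$ into $\tfrac{\ii}{2}\zeta^2$ up to the additive constant above, the contour becomes the standard four rays and the problem is exactly the one solved in closed form by parabolic cylinder functions $D_a(\zeta)$ in \cite{Deift1994}, \cite{Dieng2008}, \cite{Jenkins2011} (see also \cite{Cuccagna2014}); this explicit construction provides a solution, establishing existence.

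For uniqueness I would argue as usual. Each jump matrix in (\ref{equ: def V^3}) is the conjugate by $\delta^{\sigma_3}$ of a unipotent triangular matrix, so $\det V^{(3)}\equiv 1$; hence $\det D$ has no jump across $\Sigma^{(3)}$, extends to an entire function tending to $1$ at infinity, and therefore $\det D\equiv 1$ by Liouville's theorem. If $D$ and $\widetilde D$ were two solutions, then $D\widetilde D^{-1}$ (which is meaningful because $\det\widetilde D\equiv1$) has matching boundary values on $\Sigma^{(3)}$, extends to an entire function equal to $1+\mathcal{O}(1/z)$, and is thus identically $1$. This proves statement (1).

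Statement (2) follows from the explicit form of the parametrix. In the variable $\zeta$ it is built from $D_a(\zeta)$ with $a$ a function of $\nu_0$ alone, multiplied by the constant matrices carrying $\widehat r_0$; since $\nu_0=-\tfrac1{2\pi}\log(1+|r(\xi)|^2)$ and $\widehat r_0$ remain in a fixed bounded set (indeed they are small, see below) and the parabolic cylinder functions are uniformly bounded on each ray together with the matching across the rays, one obtains $\norm{D(\cdot;x,t)}_{L^{\infty}(\Compl)}\leq C$ with $C$ independent of $(x,t)$; the exponential decay of the off-diagonal entries noted above guarantees boundedness far from $\xi$ as well.

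Finally, for (3) I would read off the large-$\zeta$ expansion of the parabolic cylinder parametrix, which has the form $1+\zeta^{-1}P_1+\mathcal{O}(\zeta^{-2})$ with $P_1$ purely off-diagonal and determined by the connection coefficients $\beta_{12},\beta_{21}$ (satisfying $\beta_{12}\beta_{21}=\nu_0$ up to sign). Undoing $\zeta=\sqrt{8t}\,(z-\xi)$ and expanding $(z-\xi)^{-1}=z^{-1}+\mathcal{O}(z^{-2})$ yields $D_1=(8t)^{-1/2}P_1$, so that $|D_1|\leq c\,t^{-1/2}|P_1|$. The remaining point is $|P_1|\lesssim\eps$: here I use that the connection coefficients vanish \emph{linearly} as the reflection coefficient does, since as $\nu_0\to0$ the gamma-function factors degenerate to give $\beta_{12},\beta_{21}=\mathcal{O}(|\widehat r_0|)=\mathcal{O}(|r(\xi)|)$, using $\widehat r_0=r(\xi)e^{-2\ii\nu_0-2\beta_0}$ with $\re\beta_0=\mathcal{O}(\eps^2)$. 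By the local Lipschitz property of the scattering map (Theorem \ref{thm: scattering}(ii)) together with (\ref{equ: u minus sol}) one has $\norm{r}_{H^s(\Real)}\leq C\eps$, and since $s>1/2$ the embedding $H^s(\Real)\hookrightarrow L^{\infty}(\Real)$ gives $|r(\xi)|\leq C'\eps$ uniformly in $\xi$; combining these estimates yields $|D_1|\leq c\,\eps\,t^{-1/2}$. The main obstacle is precisely this last step: one must have the explicit connection-coefficient formulas at hand and verify that $P_1$ degenerates linearly (not merely continuously) in $r(\xi)$ as $r\to0$, for it is this linear vanishing that converts the smallness $\norm{r}_{H^s}\lesssim\eps$ into the factor $\eps$ in the bound on $D_1$.
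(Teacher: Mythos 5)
Your proposal is correct and follows essentially the same route as the paper, which proves nothing itself but reduces \textbf{RHP[$D$]} to the Parabolic Cylinder model problem via $\zeta=\sqrt{8t}(z-\xi)$ and cites the known explicit solution; you reconstruct exactly that reduction and supply the standard supporting details (Liouville uniqueness, boundedness of the parametrix, and the crucial point that the connection coefficients satisfy $|\beta_{12}|=\sqrt{|\nu_0|}\sim|r(\xi)|/\sqrt{2\pi}$, so that $|D_1|\lesssim \|r\|_{H^s}t^{-1/2}\lesssim\eps t^{-1/2}$ by the Sobolev embedding and the Lipschitz continuity of the scattering map). This matches the intended argument.
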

Using the transformation $\zeta\leftrightarrow\sqrt{8t}(z-z_0)$ we can transform \textbf{RHP[$D$]} into the \textbf{Parabolic Cylinder RHP}:
\begin{equation}\label{equ: RHP for P}
    \left\{
      \begin{array}{ll}
        P(\zeta)\text{ is analytic for }\arg(\zeta)\notin \set{\pi/4,3\pi/4,5\pi/4,7\pi/4},\\
        P_+(\zeta)=P_-(\zeta)V_{P}(\zeta)\text{ for }\arg(\zeta)\in \set{\pi/4,3\pi/4,5\pi/4,7\pi/4},\\
        P(\zeta)\to 1\text{ as }\zeta\to\infty,
      \end{array}
    \right.
\end{equation}
where
\begin{equation}\label{equ: def V P}
    V_P(\zeta):=\left\{
                  \begin{array}{ll}
                    \left(
                       \begin{array}{cc}
                         1 & 0 \\
                         r_0\zeta^{-2\ii\nu_0}e^{\ii\zeta^2/2} & 1 \\
                       \end{array}
                     \right)
, & \hbox{for }\arg(\zeta)=\pi/4, \\
                    \left(
                      \begin{array}{cc}
                        1 & \frac{\overline{r}_0}{1+|r_0|^2}\zeta^{2\ii\nu_0}e^{-\ii\zeta^2/2} \\
                        0 & 1 \\
                      \end{array}
                    \right)
, & \hbox{for }\arg(\zeta)=3\pi/4, \\
                    \left(
                      \begin{array}{cc}
                        1 &  0 \\
                        \frac{r_0}{1+|r_0|^2}\zeta^{-2\ii\nu_0}e^{\ii\zeta^2/2} & 1 \\
                      \end{array}
                    \right)
, & \hbox{for }\arg(\zeta)=5\pi/4, \\
                    \left(
                      \begin{array}{cc}
                        1 & \overline{r}_0\zeta^{2\ii\nu_0}e^{-\ii\zeta^2/2} \\
                        0 & 1 \\
                      \end{array}
                    \right)
, & \hbox{for }\arg(\zeta)=7\pi/4.
                  \end{array}
                \right.,
\end{equation}
The statements of Lemma \ref{lem: RHP P} on $D(z)=P(\sqrt{8t}(z-z_0))$ are consequences of analogous statements on $P$ which are well known and derived in the references mentioned above.

\section{The $\overline{\partial}$-method}\label{sec: dbar}
In this section we show that for large $t$ we can forget about the $\overline{\partial}$ part. The proof is taken from \cite{Dieng2008}. We consider \textbf{$\overline{\partial}$-RHP[4]} with $W^{(4)}\equiv 0$:
\begin{samepage}
\begin{framed}
\textbf{RHP[5]:}\\Find for each $(x,t)\in\Real\times\Real$ a $2\times 2$-matrix valued function $\Compl\ni z\mapsto m^{(4)}(z;x,t)$ which satisfies
\begin{enumerate}[(i)]
  \item $m^{(5)}(z;x,t)$ is meromorphic in $\Compl$.
  \item $m^{(5)}(z;x,t)=1+\mathcal{O}\left(\frac{1}{z}\right)$ as $|z|\to\infty$.
  \item If $\square(\xi)=\varnothing$, $m^{(5)}$ has no poles (i.e. $m^{(5)}$ is analytic in $\Omega_2\cup\Omega_5$). If $\square(\xi)$ consists of certain $z_k$ and $\overline{z}_{k}$ such that $k\in\bigtriangledown(\xi)$, $m^{(5)}$ has simple poles at these $z_k$ and $\overline{z}_{k}$ with:
      \begin{equation}\label{equ: Res sol j in nabla}
            \begin{aligned}
              \res_{z=z_k}m^{(5)}(z)&=\lim_{z\to z_k}m^{(5)}(z)
              D(z_k)\left(
                \begin{array}{cc}
                  0 & \frac{1}{c_k e^{\phi_k}T'(z_k)^2\delta(z_k)^{-2}} \\
                  0 & 0
                \end{array}
              \right)[D(z_k)]^{-1},\\
              \res_{z=\overline{z}_k}m^{(5)}(z) &=\lim_{z\to \overline{z}_k}m^{(5)}(z)
              D(\overline{z}_k)\left(
                \begin{array}{cc}
                  0 & 0 \\
                  \frac{-1}{\overline{c}_k e^{\overline{\phi}_k} \overline{T'(z_k)}^2\delta(\overline{z}_k)^{2}} & 0
                \end{array}
              \right)[D(\overline{z}_k)]^{-1}.
            \end{aligned}
      \end{equation}
      If $\square(\xi)$ consists of certain $z_k$ and $\overline{z}_{k}$ such that $k\in\bigtriangleup(\xi)$, $m^{(5)}$ has simple poles at these $z_k$ and $\overline{z}_{k}$ with:
      \begin{equation}\label{equ: Res sol j in laplace}
            \begin{aligned}
              \res_{z=z_k}m^{(5)}(z)&=\lim_{z\to z_k}m^{(5)}(z)
              D(z_k)\left(
                \begin{array}{cc}
                  0 & 0 \\
                  c_k e^{\phi_k}T(z_k)^2\delta(z_k)^{-2}& 0
                \end{array}
              \right)[D(z_k)]^{-1},\\
              \res_{z=\overline{z}_k}m^{(5)}(z) &=\lim_{z\to \overline{z}_k}m^{(5)}(z)
              D(\overline{z}_k)\left(
                \begin{array}{cc}
                  0 & -\overline{c}_k e^{\overline{\phi}_k} \overline{T(z_k)}^2\delta(\overline{z}_k)^{2} \\
                  0 & 0
                \end{array}
              \right)[D(\overline{z}_k)]^{-1}.
            \end{aligned}
      \end{equation}
\end{enumerate}
\end{framed}
\end{samepage}
\begin{lem}\label{lem: m^4 approx m^5}
    Let $m^{(4)}$ solve \textbf{$\overline{\partial}$-RHP[4]} and $m^{(5)}$ be a solution to \textbf{RHP[5]}. Then there is a matrix $E_1(x,t)$ for which
    \begin{equation*}
        \|E_1\|\leq c t^{-1/2}\qquad(t>0)
    \end{equation*}
    (with $c>0$ independent of $x$) holds and such that
    \begin{equation}\label{equ: m^1 approx m^sol}
         m^{(4)}(z)= \left[ 1+\frac{E_1}{z}+\mathcal{O}\left(\frac{1}{z^2}\right) \right]m^{(5)}(z)
    \end{equation}
    as $|z|\to\infty$.
\end{lem}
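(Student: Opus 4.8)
The plan is to study the quotient $E(z):=m^{(4)}(z)\,[m^{(5)}(z)]^{-1}$ and to show that it solves a \emph{pure} $\overline{\partial}$-problem with neither jumps nor poles, so that it can be recast as a Fredholm integral equation whose inhomogeneous term is the identity. First I would verify that $E$ is free of singularities. The residue conditions of \textbf{$\overline{\partial}$-RHP[4]} and \textbf{RHP[5]} coincide verbatim, and $\det m^{(4)}\equiv\det m^{(5)}\equiv 1$; hence at each $z_k,\overline{z}_k\in\square(\xi)$ the local expansions of $m^{(4)}$ and $[m^{(5)}]^{-1}$ exhibit exactly the rank-one structure that forced cancellation in the proof of Lemma \ref{lem: m^1 approx m^sol}, so that $E(z)=\mathcal{O}(1)$ there. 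Likewise $E$ is continuous across $\Sigma^{(3)}$, since $m^{(4)}$ and $m^{(5)}$ carry the same (trivial) jump after the $D$-conjugation, and $E(z)\to 1$ as $|z|\to\infty$. Since $m^{(5)}$ is analytic on the sectors $\Omega_1\cup\Omega_3\cup\Omega_4\cup\Omega_6$ where $W^{(4)}\neq 0$, so that $\overline{\partial}[m^{(5)}]^{-1}=0$ there, a one-line computation gives
\begin{equation*}
    \overline{\partial}E = (\overline{\partial}m^{(4)})\,[m^{(5)}]^{-1} = m^{(4)}W^{(4)}[m^{(5)}]^{-1} = E\,\mathcal{W},\qquad \mathcal{W}:=m^{(5)}W^{(4)}[m^{(5)}]^{-1}.
\end{equation*}

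Next I would convert this $\overline{\partial}$-problem into an integral equation by means of the solid Cauchy transform. Because $E$ has neither jumps nor poles and tends to the identity at infinity, the Cauchy--Pompeiu formula yields
\begin{equation*}
    E(z)=1+\frac{1}{\pi}\int_{\Compl}\frac{E(s)\mathcal{W}(s)}{z-s}\,dA(s)=:1+(\mathcal{K}E)(z),
\end{equation*}
where $dA$ is planar Lebesgue measure; conversely any bounded solution of $(1-\mathcal{K})E=1$ satisfies $\overline{\partial}E=E\mathcal{W}$ together with the normalisation. Expanding $\tfrac{1}{z-s}=\tfrac1z+\mathcal{O}(z^{-2})$ identifies the coefficient $E_1$ of the statement as
\begin{equation*}
    E_1=\frac{1}{\pi}\int_{\Compl}E(s)\mathcal{W}(s)\,dA(s).
\end{equation*}
It therefore suffices to prove that $1-\mathcal{K}$ is invertible on $L^{\infty}$ with $\|E\|_{L^\infty}\le c$, and then to bound the above area integral by $c\,t^{-1/2}$.

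For the estimates I would first invoke Lemma \ref{lem: RHP P}, namely $\|D\|_{L^\infty(\Compl)}\le C$, together with the explicit solvability of \textbf{RHP[5]} (which furnishes a uniform bound on $m^{(5)}$ and $[m^{(5)}]^{-1}$ on $\supp W^{(4)}$), in order to reduce everything to $W^{(3)}$. By (\ref{equ: def W}) the latter is built from $\overline{\partial}R_j$ multiplied by $e^{\pm\phi}\delta^{\mp2}$, and $|\delta^{\pm 2}|$ is bounded. The decisive feature is the sign of $\re\phi(z)=8\,\im(z)\,t\,(\xi-\re z)$: on each of $\Omega_1,\Omega_3,\Omega_4,\Omega_6$ the exponent of the relevant factor $e^{\pm\phi}$ is negative, producing a Gaussian-type decay $|e^{\pm\phi(s)}|\le e^{-c\,t\,|\re s-\xi|\,|\im s|}$. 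Combining this with the bound on $\overline{\partial}R_j$ coming from the smooth cut-off $\cos(2\arg(\cdot-\xi))$ and from the $H^s$-regularity of $r$ ($s>1/2$), one obtains, after passing to polar coordinates $s-\xi=\rho\, e^{\ii\theta}$ and applying the Cauchy--Schwarz inequality in $\rho$,
\begin{equation*}
    \sup_{z\in\Compl}\int_{\Compl}\frac{\|\mathcal{W}(s)\|}{|z-s|}\,dA(s)\le c\,t^{-1/4},\qquad \int_{\Compl}\|\mathcal{W}(s)\|\,dA(s)\le c\,t^{-1/2}.
\end{equation*}
The first inequality shows $\|\mathcal{K}\|_{L^\infty\to L^\infty}\le c\,t^{-1/4}$, so that $1-\mathcal{K}$ is invertible by Neumann series for $t$ large and $\|E\|_{L^\infty}\le c$; inserting this into the formula for $E_1$ and using the second inequality gives $\|E_1\|\le c\,t^{-1/2}$, as claimed.

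I expect the main obstacle to be precisely these two area-integral estimates. The difficulty is that the exponential decay of $e^{\pm\phi}$ degenerates as $s\to\xi$ and as $\im s\to 0$, exactly where $\overline{\partial}R_j$ need not be small, so that the gain of $t^{-1/4}$ respectively $t^{-1/2}$ hinges on balancing this decay against the merely $H^s$ (hence H\"older-$s$, $s>1/2$) smoothness of $r$ through the singular kernel $1/|z-s|$. This is the technical heart of the $\overline{\partial}$-steepest-descent step, which is carried out in \cite{Dieng2008}.
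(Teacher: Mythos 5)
Your proposal follows the paper's proof essentially verbatim: the paper likewise sets $E=m^{(4)}[m^{(5)}]^{-1}$, observes that it solves a pure $\overline{\partial}$-problem with data $W=m^{(5)}W^{(4)}[m^{(5)}]^{-1}$, converts this into the integral equation $E=1+J(E)$ via the solid Cauchy transform, inverts $1-J$ by a Neumann series using $\|J\|_{L^{\infty}\to L^{\infty}}\leq c\,t^{(1-2s)/4}$, and then bounds $E_1$ by the area integral of $W$, delegating the two decisive estimates to Proposition 3.6 and Lemma 3.9 of \cite{Cuccagna2014}. The only difference is one of exposition: you spell out the sign analysis of $\re\phi$ on the four sectors and the polar-coordinate/Cauchy--Schwarz mechanism behind those estimates, which the paper simply cites.
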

\begin{proof}
    It can be easily verified that $E(z):=m^{(4)}(z)[m^{(5)}(z)]^{-1}$ solves the following $\overline{\partial}$-problem:
    \begin{samepage}
    \begin{framed}
        \textbf{$\overline{\partial}$-problem for $E$:}
        \\Find for each $(x,t)\in\Real\times\Real$ a $2\times 2$-matrix valued function $\Compl\ni z\mapsto E(z;x,t)$ which satisfies
        \begin{enumerate}[(i)]
           \item $E(z)$ is continuous in $\Compl$,
           \item $E(z;x,t)=1+\frac{E_1}{z} \mathcal{O}\left(\frac{1}{z^2}\right)$ as $|z|\to\infty$, $z\in\Omega_2\cup\Omega_5$,
           \item $\overline{\partial} E=EW$ with $W=m^{(5)}W^{(4)}[m^{(5)}]^{-1}$.
    \end{enumerate}
    \end{framed}
    \end{samepage}
    As described in \cite[Section~3]{Pelinovsky2014}, the solution $E$ is obtained by taking the unique solution of $E=1+J(E)$. The operator $J:L^{\infty}(\Compl)\to L^{\infty}(\Compl)\cap C^0(\Compl)$ is defined by
    \begin{equation}\label{equ: def J}
        JH(z):=\frac{1}{\pi}\int_{\Compl} \frac{H(\varsigma)W(\varsigma)}{\varsigma-z} dA(\varsigma).
    \end{equation}
    Using estimates on $\overline{\partial}R_j$ (see Proposition 3.6 in \cite{Cuccagna2014} it can be proved that $\|J\|_{L^{\infty}(\Compl)\to L^{\infty}(\Compl)}\leq c t^{\frac{1-2s}{4}} $ ($c$ independent of $x$). Hence, $\|E\|_{L^{\infty}(\Compl)}= \|(1-J)^{-1}1\|_{L^{\infty}(\Compl)}$ is bounded uniformly in $(x,t)$ for sufficiently large $t$. As a consequence we find
    \begin{equation*}
        |E_1|=\left|\frac{1}{\pi}\int_{\Compl}EWdA\right|\leq c\sum_{j\in\set{1,3,4,6}}\int_{\Omega_j}|W|dA\leq c t^{-\frac{1+2s}{4}}.
    \end{equation*}
    For the latter inequality see the calculations in the proof of Lemma 3.9 in \cite{Cuccagna2014}.
\end{proof} 

\section{The last step}\label{sec: last step}
Note that \textbf{RHP[5]} does not describe a soliton or breather because the residuum conditions are not those of solitons. However, we have $P(z_k)\to1$ and $\delta(z_k)\to 1/\Lambda^+_k$ for $|\xi-\re(z_k)|<1/\sqrt{t}$ and $t\to\infty$.
For a small $\rho>0$ such that $\bigcap_{z\in\mathcal{Z}}B_{\rho}(z)=\varnothing$ we set:
\begin{equation}\label{equ: def m^6}
    m^{(6)}(z):=
    \left\{
      \begin{array}{ll}
        m^{(5)}(z)D(z_k)\left( \delta(z_k)\Lambda_k^+\right)^{\sigma_3}, & \hbox{if }z\in B_{\rho}(z_k), z_k\in\square(\xi), \\
        m^{(5)}(z)D(\overline{z}_k)\left( \delta(\overline{z}_k)/ \overline{\Lambda}^+_k\right)^{\sigma_3}, & \hbox{if }z\in B_{\rho}(\overline{z}_k), \overline{z}_k\in\square(\xi), \\
        m^{(5)}(z), & \hbox{else.}
      \end{array}
    \right.
\end{equation}
Note that  $m^{(6)}$ differs from $m^{(5)}$ only if $\square(\xi)\neq\varnothing$. For $\square(\xi)\neq\varnothing$ a discontinuity  appears on
\begin{equation*}
    \Sigma^{(6)}(\xi)=\bigcup_{z\in\square(\xi)}\partial B_{\rho}(z).
\end{equation*}

\begin{lem}
    If $m^{(5)}(z)$ solves \textbf{RHP[5]}, then $m^{(6)}(z)$ defined in (\ref{equ: def m^6}) is a solution to the following RHP:
\begin{samepage}
\begin{framed}
\textbf{RHP[6]:}\\Find for each $(x,t)\in\Real\times\Real$ a $2\times 2$-matrix valued function $\Compl\ni z\mapsto m^{(6)}(z;x,t)$ which satisfies
\begin{enumerate}[(i)]
  \item $m^{(6)}(z;x,t)$ is meromorphic in $\Compl\setminus\Sigma^{(6)}$.
  \item $m^{(6)}(z;x,t)=1+\mathcal{O}\left(\frac{1}{z}\right)$ as $|z|\to\infty$.
  \item If $\square(\xi)=\varnothing$, $m^{(6)}$ has no poles. If $\square(\xi)$ consists of certain $z_k$ and $\overline{z}_{k}$ such that $k\in\bigtriangledown(\xi)$, $m^{(5)}$ has simple poles at these $z_k$ and $\overline{z}_{k}$ with:
      \begin{equation}\label{equ: Res sol j in nabla}
            \begin{aligned}
              \res_{z=z_k}m^{(6)}(z)&=\lim_{z\to z_k}m^{(6)}(z)
              \left(
                \begin{array}{cc}
                  0 & \frac{1}{c_k (\Lambda_k^+)^2  e^{\phi_k}T'(z_k)^2} \\
                  0 & 0
                \end{array}
              \right),\\
              \res_{z=\overline{z}_k}m^{(6)}(z) &=\lim_{z\to \overline{z}_k}m^{(6)}(z)
              \left(
                \begin{array}{cc}
                  0 & 0 \\
                  \frac{-1}{\overline{c}_k (\overline{\Lambda}_k^+)^2 e^{\overline{\phi}_k} \overline{T'(z_k)}^2} & 0
                \end{array}
              \right).
            \end{aligned}
      \end{equation}
      If $\square(\xi)$ consists of certain $z_k$ and $\overline{z}_{k}$ such that $k\in\bigtriangleup(\xi)$, $m^{(6)}$ has simple poles at these $z_k$ and $\overline{z}_{k}$ with:
      \begin{equation}\label{equ: Res sol j in laplace}
            \begin{aligned}
              \res_{z=z_k}m^{(6)}(z)&=\lim_{z\to z_k}m^{(6)}(z)
              \left(
                \begin{array}{cc}
                  0 & 0 \\
                  c_k (\Lambda_k^+)^2 e^{\phi_k}T(z_k)^2& 0
                \end{array}
              \right),\\
              \res_{z=\overline{z}_k}m^{(6)}(z) &=\lim_{z\to \overline{z}_k}m^{(6)}(z)
              \left(
                \begin{array}{cc}
                  0 & -\overline{c}_k (\overline{\Lambda}_k^+)^2 e^{\overline{\phi}_k} \overline{T(z_k)}^2 \\
                  0 & 0
                \end{array}
              \right).
            \end{aligned}
      \end{equation}
  \item The non-tangential boundary values $m_{\pm}^{(6)}(z)$ exist for $z\in\Sigma^{(6)}$ and satisfy the jump relation $m_+^{(6)}=m_-^{(6)}V^{(6)}$, where
      \begin{equation}\label{equ: def V^6}
         V^{(6)}(z)=
         \left\{
          \begin{array}{ll}
            D(z_k)\left( \delta(z_k)\Lambda_k^+\right)^{\sigma_3}, & \hbox{if }z\in \partial B_{\rho}(z_k), z_k\in\square(\xi), \\
            D(\overline{z}_k)\left( \delta(\overline{z}_k)/ \overline{\Lambda}^+_k\right)^{\sigma_3}, & \hbox{if }z\in B_{\rho}(\overline{z}_k),\overline{z}_k\in\square(\xi).
          \end{array}
         \right.
      \end{equation}
\end{enumerate}
\end{framed}
\end{samepage}
\end{lem}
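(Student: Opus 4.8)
The plan is to verify the four items of \textbf{RHP[6]} directly, the crucial observation being that on each disk $B_{\rho}(z_k)$ with $z_k\in\square(\xi)$ the transformation (\ref{equ: def m^6}) is right-multiplication by the matrix
\[
Q_k:=D(z_k)\bigl(\delta(z_k)\Lambda_k^+\bigr)^{\sigma_3},
\]
which is \emph{independent of $z$} (it depends on $x,t$ only through the fixed numbers $z_k$, $\delta(z_k)$, $\Lambda_k^+$ and the value $D(z_k)$), and on $B_{\rho}(\overline z_k)$ by $\widetilde Q_k:=D(\overline z_k)(\delta(\overline z_k)/\overline\Lambda_k^+)^{\sigma_3}$; outside $\bigcup_{z\in\square(\xi)}B_{\rho}(z)$ one has $m^{(6)}=m^{(5)}$. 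Since each $Q_k$ (resp. $\widetilde Q_k$) is a constant invertible matrix, item (i) is immediate from the meromorphy of $m^{(5)}$ on $\Compl$, and item (ii) holds because $m^{(6)}=m^{(5)}=1+\mathcal{O}(1/z)$ near $z=\infty$.

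For the jump (iv) I would use that $m^{(5)}$ has no jump across the circles $\partial B_{\rho}(z_k)$, being meromorphic on $\Compl$ by \textbf{RHP[5]}(i). With the counterclockwise orientation the boundary value from inside the disk is $m_+^{(6)}=m^{(5)}Q_k$ while the value from outside is $m_-^{(6)}=m^{(5)}$, whence $V^{(6)}=(m_-^{(6)})^{-1}m_+^{(6)}=Q_k$, which is exactly (\ref{equ: def V^6}) (and analogously on $\partial B_{\rho}(\overline z_k)$). Because the disks $B_{\rho}(z)$, $z\in\square(\xi)$, are chosen pairwise disjoint, no new contour intersections arise and this analysis is purely local.

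The main content is item (iii). As $Q_k$ is constant in $z$, one has $\res_{z=z_k}m^{(6)}=(\res_{z=z_k}m^{(5)})\,Q_k$, and inside the disk $m^{(5)}=m^{(6)}Q_k^{-1}$; substituting the residue relation of \textbf{RHP[5]} for $k\in\bigtriangledown(\xi)$ yields
\[
\res_{z=z_k}m^{(6)}=\lim_{z\to z_k}m^{(6)}(z)\,Q_k^{-1}D(z_k)\begin{pmatrix}0&\dfrac{\delta(z_k)^2}{c_ke^{\phi_k}T'(z_k)^2}\\[2pt]0&0\end{pmatrix}D(z_k)^{-1}Q_k.
\]
Here the factors $D(z_k)$ drop out because $Q_k^{-1}D(z_k)=(\delta(z_k)\Lambda_k^+)^{-\sigma_3}$ and $D(z_k)^{-1}Q_k=(\delta(z_k)\Lambda_k^+)^{\sigma_3}$, so the bracketed matrix is conjugated only by the diagonal $s^{\sigma_3}$ with $s=\delta(z_k)\Lambda_k^+$. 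Conjugating a strictly upper-triangular matrix multiplies its $(1,2)$ entry by $s^{-2}=\delta(z_k)^{-2}(\Lambda_k^+)^{-2}$, the two powers of $\delta(z_k)$ cancel, and one is left with precisely $\tfrac{1}{c_k(\Lambda_k^+)^2e^{\phi_k}T'(z_k)^2}$ as required. The case $k\in\bigtriangleup(\xi)$ is identical except that the strictly lower-triangular residue matrix acquires the factor $s^{2}=\delta(z_k)^2(\Lambda_k^+)^2$, which again undoes the $\delta(z_k)^{-2}$ of the \textbf{RHP[5]} residue and reproduces $c_k(\Lambda_k^+)^2e^{\phi_k}T(z_k)^2$.

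The only delicate point — what I expect to cost care in the write-up rather than conceptually — is tracking the two cancellations at once: that $D(z_k)$ vanishes by conjugation while $\delta(z_k)^{\mp2}$ is exactly undone by the $s^{\pm2}$ produced by the diagonal conjugation, so that $\Lambda_k^+$ is the sole surviving new factor. The four relations at $\overline z_k$ then follow either by the same computation with $\widetilde Q_k$ (where $\delta(\overline z_k)/\overline\Lambda_k^+$ plays the role of $s$), or, more economically, from the Schwarz symmetry $\overline{m^{(6)}(z)}=\sigma_2 m^{(6)}(\overline z)\sigma_2$, inherited from the corresponding symmetries of $m^{(5)}$, $D$, $\delta$ and $\Lambda_k^+$.
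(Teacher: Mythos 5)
Your proposal is correct: since the transformation on each disk is right-multiplication by the constant invertible matrix $Q_k=D(z_k)(\delta(z_k)\Lambda_k^+)^{\sigma_3}$, the factors $D(z_k)$ cancel by conjugation and the diagonal conjugation by $s^{\pm 2}$ with $s=\delta(z_k)\Lambda_k^+$ exactly removes $\delta(z_k)^{\mp 2}$, leaving $(\Lambda_k^+)^{\pm 2}$, and your treatment of the jump and of the conjugate poles is likewise sound. The paper omits this proof as ``elementary,'' and your direct verification is precisely the argument it has in mind.
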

The proof is elementary.\\ We are now arrived at our last step. Later in Lemma \ref{lem: m^6 approx m^7} we will show that we may replace $V^{(6)}$ in (\ref{equ: def V^6}) by $1$ which is a consequence of $P(z_k)\to 1$ and $\delta(z_k)\to 1/\Lambda^+_k$ for $|\xi-\re(z_k)|<1/\sqrt{t}$ and $t\to\infty$. Since the condition $|\xi-\re(z_k)|<1/\sqrt{t}$ is fulfilled whenever $\square(\xi)\neq\varnothing$ we thus have $ V^{(6)}\to 1$:
\begin{prop}\label{prop: V^6 to 1}
    There exist constants $c,T>0$ such that
    \begin{equation}\label{equ: V^6 to 1}
            \|V^{(6)}-1\|_{L^{\infty}(\Sigma^{(6)})}\leq ct^{-1/2},\qquad
            \|V^{(6)}-1\|_{L^{2}(\Sigma^{(6)})}\leq ct^{-1/2},
    \end{equation}
    for $t>T$.
\end{prop}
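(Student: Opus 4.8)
The plan is to use that $V^{(6)}$ is \emph{piecewise constant} on $\Sigma^{(6)}$: on the circle $\partial B_\rho(z_k)$ it equals the $z$-independent matrix $D(z_k)(\delta(z_k)\Lambda_k^+)^{\sigma_3}$, and on $\partial B_\rho(\overline{z}_k)$ the matrix $D(\overline{z}_k)(\delta(\overline{z}_k)/\overline{\Lambda}_k^+)^{\sigma_3}$. Consequently both norms are controlled by the finitely many constants attached to the circles in $\square(\xi)$. Since $\rho>0$ is fixed and at most $2N$ circles occur, the total length $|\Sigma^{(6)}|\le 4\pi N\rho$ is bounded uniformly in $t$, so
\[
    \|V^{(6)}-1\|_{L^2(\Sigma^{(6)})}\le |\Sigma^{(6)}|^{1/2}\,\|V^{(6)}-1\|_{L^\infty(\Sigma^{(6)})};
\]
hence the $L^2$ bound follows immediately once the $L^\infty$ bound is established, and it remains to show $\|D(z_k)(\delta(z_k)\Lambda_k^+)^{\sigma_3}-1\|\le ct^{-1/2}$ for each $z_k\in\square(\xi)$ (the $\overline{z}_k$ circles being identical). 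I would split this into an estimate for $D(z_k)$ and one for the scalar $\delta(z_k)\Lambda_k^+$.

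For the first, recall from Section \ref{sec: cylinder} that $D(z)=P(\sqrt{8t}(z-\xi))$, where $\xi$ is the stationary point of $\phi$ and $P$ solves the Parabolic Cylinder RHP (\ref{equ: RHP for P}) normalized by $P(\zeta)\to 1$ as $\zeta\to\infty$; the standard model asymptotics give $P(\zeta)=1+\mathcal{O}(|\zeta|^{-1})$ off the rays $\arg\zeta\in\{\pi/4,3\pi/4,5\pi/4,7\pi/4\}$, with a constant uniform in $(x,t)$ because the data $\nu_0,\widehat{r}_0$ stay bounded (cf. Lemma \ref{lem: RHP P}). Set $\zeta_k:=\sqrt{8t}(z_k-\xi)$. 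The condition $z_k\in\square(\xi)$ means $|\re(z_k)-\xi|\le t^{-1/2}$, so $\re\zeta_k$ remains bounded while $\im\zeta_k=\sqrt{8t}\,\im(z_k)\to\infty$; in particular $\arg\zeta_k\to\pi/2$ stays away from the jump rays, so the expansion applies. As the poles are fixed there is $\delta_0>0$ with $\im(z_k)\ge\delta_0$, whence $|\zeta_k|\ge\sqrt{8t}\,\delta_0$ and
\[
    \|D(z_k)-1\|=\|P(\zeta_k)-1\|\le \frac{c}{|\zeta_k|}\le \frac{c}{\sqrt{8t}\,\delta_0}\le c't^{-1/2}.
\]

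For the scalar factor I would compare $\log\delta(z_k)$ with $\log(1/\Lambda_k^+)$. By the definition of $\delta$ and of $\Lambda_k^+$ in (\ref{equ: def Lambda}) the two have the same integrand $\tfrac{1}{2\pi\ii}\tfrac{\log(1+|r(y)|^2)}{y-z_k}$ and differ only in the upper endpoint, $\xi$ versus $\re(z_k)$; therefore
\[
    \log\!\big(\delta(z_k)\Lambda_k^+\big)=\frac{1}{2\pi\ii}\int_{\re(z_k)}^{\xi}\frac{\log(1+|r(y)|^2)}{y-z_k}\,dy .
\]
The interval has length $\le t^{-1/2}$; since $r\in H^s(\Real)\hookrightarrow L^\infty(\Real)$ (as $s>1/2$) the numerator is bounded, while $|y-z_k|\ge\im(z_k)\ge\delta_0$ bounds the denominator below, so the integral is $\mathcal{O}(t^{-1/2})$ and, by Lipschitz continuity of $\exp$ near $1$, $|\delta(z_k)\Lambda_k^+-1|\le ct^{-1/2}$, hence $\|(\delta(z_k)\Lambda_k^+)^{\sigma_3}-1\|\le ct^{-1/2}$. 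Writing
\[
    V^{(6)}-1=(D(z_k)-1)\big(\delta(z_k)\Lambda_k^+\big)^{\sigma_3}+\big((\delta(z_k)\Lambda_k^+)^{\sigma_3}-1\big)
\]
and using that $(\delta(z_k)\Lambda_k^+)^{\sigma_3}$ is bounded then yields the $L^\infty$ estimate; the $\overline{z}_k$ circles are treated identically, or deduced from the symmetries $\overline{\delta(z)}=1/\delta(\overline{z})$ and $\overline{D(z)}=\sigma_2 D(\overline{z})\sigma_2$. I expect the main obstacle to be the control of $D(z_k)$: one must justify the $\mathcal{O}(|\zeta_k|^{-1})$ rate of the parabolic cylinder solution at the concrete point $\zeta_k$ (rather than in the $|\zeta|\to\infty$ limit used for reconstruction) and with a constant uniform in $x,t$, which is precisely where the fixed separation $\im(z_k)\ge\delta_0$ of the poles from $\Real$ enters.
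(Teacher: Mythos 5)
Your proposal is correct and follows essentially the same route as the paper: reduce $L^2$ to $L^\infty$ via the finite measure of $\Sigma^{(6)}$, bound the scalar factor by writing $\log(\delta(z_k)\Lambda_k^+)$ as an integral over the interval $[\re(z_k),\xi]$ of length at most $t^{-1/2}$ with denominator bounded below by $\im(z_k)$, and control $D(z_k)$ separately. The only difference is that the paper simply cites Lemma \ref{lem: RHP P} for $|D(z_k)-1|\leq ct^{-1/2}$, whereas you supply the underlying reason via the rescaling $\zeta_k=\sqrt{8t}(z_k-\xi)$ and the decay $P(\zeta)=1+\mathcal{O}(|\zeta|^{-1})$ of the parabolic cylinder solution.
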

\begin{proof}
    Obviously the $L^2$-estimate of (\ref{equ: V^6 to 1}) follows from the $L^{\infty}$-estimate, due to $\meas(\Sigma^{(6)})<\infty$. Furthermore the proposition is trivial in the case of $\square(\xi)=\varnothing$, where we have $\Sigma^{(6)}=\varnothing$. Let us now assume that $z_k\in\square(\xi)\cap\mathcal{Z}_+$ and thus $|\xi-\re(z_k)|<1/\sqrt{t}$:
    \begin{eqnarray*}
      \left|\delta(z_k)\Lambda_k^+-1\right| &=& \left|\exp\left(\frac{1}{2\pi\ii} \int_{\re(z_{k})}^{\xi} \frac{\log(1+|r(\varsigma)|^2)} {\varsigma-z_{k}}d\varsigma\right)-1\right| \\
       &\leq& c\left|\int_{\re(z_{k})}^{\xi} \frac{\log(1+|r(\varsigma)|^2)} {\varsigma-z_{k}}d\varsigma\right| \\
       &\leq & c\frac{|\xi-\re(z_k)|}{\im(z_k)} \log(1+\|r\|_{H^s(\Real)}^2)\leq ct^{-1/2}
    \end{eqnarray*}
    Analogously we have $\left|\delta(\overline{z}_k)/ \overline{\Lambda}^+_k-1\right|\leq ct^{-1/2}$. Additionally we take $|D(z_k)-1|\leq ct^{-1/2}$ and $|D(\overline{z}_k)-1|\leq ct^{-1/2}$ from Lemma \ref{lem: RHP P} and thus the proof is completed.
\end{proof}
If we omit the jump on $\Sigma^{(6)}$ in \textbf{RHP[6]} we get:
\begin{samepage}
\begin{framed}
\textbf{RHP[7]:}\\Find for each $(x,t)\in\Real\times\Real$ a $2\times 2$-matrix valued function $\Compl\ni z\mapsto m^{(7)}(z;x,t)$ which satisfies
\begin{enumerate}[(i)]
  \item $m^{(7)}(z;x,t)$ is meromorphic in $\Compl$.
  \item $m^{(7)}(z;x,t)=1+\mathcal{O}\left(\frac{1}{z}\right)$ as $|z|\to\infty$.
  \item If $\square(\xi)=\varnothing$, $m^{(7)}$ has no poles (i.e. $m^{(7)}$ is entire). If $\square(\xi)$ consists of certain $z_k$ and $\overline{z}_{k}$ such that $k\in\bigtriangledown(\xi)$, $m^{(5)}$ has simple poles at these $z_k$ and $\overline{z}_{k}$ with:
      \begin{equation}\label{equ: Res sol j in nabla}
            \begin{aligned}
              \res_{z=z_k}m^{(7)}(z)&=\lim_{z\to z_k}m^{(7)}(z)
              \left(
                \begin{array}{cc}
                  0 & \frac{1}{c_k (\Lambda_k^+)^2  e^{\phi_k}T'(z_k)^2} \\
                  0 & 0
                \end{array}
              \right),\\
              \res_{z=\overline{z}_k}m^{(7)}(z) &=\lim_{z\to \overline{z}_k}m^{(7)}(z)
              \left(
                \begin{array}{cc}
                  0 & 0 \\
                  \frac{-1}{\overline{c}_k (\overline{\Lambda}_k^+)^2 e^{\overline{\phi}_k} \overline{T'(z_k)}^2} & 0
                \end{array}
              \right).
            \end{aligned}
      \end{equation}
      If $\square(\xi)$ consists of certain $z_k$ and $\overline{z}_{k}$ such that $k\in\bigtriangleup(\xi)$, $m^{(7)}$ has simple poles at these $z_k$ and $\overline{z}_{k}$ with:
      \begin{equation}\label{equ: Res sol j in laplace}
            \begin{aligned}
              \res_{z=z_k}m^{(7)}(z)&=\lim_{z\to z_k}m^{(7)}(z)
              \left(
                \begin{array}{cc}
                  0 & 0 \\
                  c_k (\Lambda_k^+)^2 e^{\phi_k}T(z_k)^2& 0
                \end{array}
              \right),\\
              \res_{z=\overline{z}_k}m^{(7)}(z) &=\lim_{z\to \overline{z}_k}m^{(7)}(z)
              \left(
                \begin{array}{cc}
                  0 & -\overline{c}_k (\overline{\Lambda}_k^+)^2 e^{\overline{\phi}_k} \overline{T(z_k)}^2 \\
                  0 & 0
                \end{array}
              \right).
            \end{aligned}
      \end{equation}
\end{enumerate}
\end{framed}
\end{samepage}
The following Lemma is comparable to Lemma \ref{lem: m^1 approx m^sol}:
\begin{lem}\label{lem: m^6 approx m^7}
    Let $m^{(6)}$ solve \textbf{RHP[6]} and $m^{(7)}$ be a solution to \textbf{RHP[7]}. Then there is a matrix $F_1(x,t)$ for which
    \begin{equation*}
        \|F_1\|\leq c t^{-1/2}\qquad(t>0)
    \end{equation*}
    (with $c>0$ independent of $x$) holds and such that
    \begin{equation}\label{equ: m^6 approx m^7}
         m^{(6)}(z)= \left[ 1+\frac{F_1}{z}+\mathcal{O}\left(\frac{1}{z^2}\right) \right]m^{(7)}(z)
    \end{equation}
    as $|z|\to\infty$
\end{lem}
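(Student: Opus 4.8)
The plan is to imitate the proof of Lemma \ref{lem: m^1 approx m^sol} line by line, now with the contour $\Sigma^{(6)}$ in place of $\Sigma^{(1)}$ and the jump $V^{(6)}$ in place of $V^{(1)}|_{\Sigma^{(1)}}$. First I would set
\[
    F(z):=m^{(6)}(z)\left[m^{(7)}(z)\right]^{-1}
\]
(which is well defined because $\det m^{(6)}\equiv\det m^{(7)}\equiv 1$) and claim that $F$ solves a small-norm Riemann Hilbert problem: $F$ is analytic on $\Compl\setminus\Sigma^{(6)}$, $F(z)=1+\mathcal{O}(1/z)$ as $|z|\to\infty$, and $F_+=F_-V^{(F)}$ on $\Sigma^{(6)}$ with
\[
    V^{(F)}(z):=m^{(7)}(z)\,V^{(6)}(z)\,\left[m^{(7)}(z)\right]^{-1}.
\]
The behavior at infinity and the jump relation are immediate from the definitions; the only point requiring care is the analyticity of $F$ across the poles.

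The crucial observation is that \textbf{RHP[6]} and \textbf{RHP[7]} impose \emph{exactly} the same residue conditions at each $z_k,\overline{z}_k\in\square(\xi)$. Consequently, the cancellation of the principal parts in the product $m^{(6)}(z)[m^{(7)}(z)]^{-1}$ is proved verbatim as for $C(z)$ in Lemma \ref{lem: m^1 approx m^sol}: expanding $m^{(6)}$ and $[m^{(7)}]^{-1}$ in Laurent series about $z_k$ and using the nilpotent (rank-one, off-diagonal) structure of the residue relations, one finds that the pole of order one cancels, so that $F(z)=\mathcal{O}(1)$ as $z\to z_k$, and analogously as $z\to\overline{z}_k$. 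Thus $F$ extends analytically across $\square(\xi)$ and is genuinely analytic off $\Sigma^{(6)}$.

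Next I would invoke the standard small-norm theory. By Proposition \ref{prop: V^6 to 1} we already have $\|V^{(6)}-1\|_{L^\infty(\Sigma^{(6)})}\leq ct^{-1/2}$ and $\|V^{(6)}-1\|_{L^2(\Sigma^{(6)})}\leq ct^{-1/2}$; combining this with the uniform bound $\|m^{(7)}\|_{L^\infty(\Sigma^{(6)})}\leq G$ (with $G$ independent of $x,t$, since on the fixed-radius circles $\partial B_\rho$ the matrix $m^{(7)}$ stays bounded away from its poles) yields
\[
    \|V^{(F)}-1\|_{L^\infty(\Sigma^{(6)})}\leq ct^{-1/2},\qquad
    \|V^{(F)}-1\|_{L^2(\Sigma^{(6)})}\leq ct^{-1/2}.
\]
Hence the associated Cauchy operator $C_{V^{(F)}}f:=C^-_{\Sigma^{(6)}}\!\big(f(V^{(F)}-1)\big)$ has small $L^2\to L^2$ norm, so $1-C_{V^{(F)}}$ is invertible for large $t$ and $F$ admits the representation
\[
    F(z)=1+\frac{1}{2\pi\ii}\int_{\Sigma^{(6)}} \frac{\mu(\zeta)\big(V^{(F)}(\zeta)-1\big)}{\zeta-z}\,d\zeta,
    \qquad \mu:=(1-C_{V^{(F)}})^{-1}1 .
\]
Since $\meas(\Sigma^{(6)})<\infty$ we have $\|1\|_{L^2(\Sigma^{(6)})}\leq\co$, whence $\|\mu\|_{L^2(\Sigma^{(6)})}\leq\co$.

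Finally I would read off the coefficient of $1/z$ in the large-$z$ expansion,
\[
    F_1=-\frac{1}{2\pi\ii}\int_{\Sigma^{(6)}}\mu(\zeta)\big(V^{(F)}(\zeta)-1\big)\,d\zeta,
\]
and estimate, via H\"older's inequality, $\|F_1\|\leq c\,\|\mu\|_{L^2}\,\|V^{(F)}-1\|_{L^2}\leq ct^{-1/2}$, which is the claimed bound. The main obstacle is the (mild) verification that the apparent singularities of $m^{(6)}$ and $m^{(7)}$ cancel in their product, so that $F$ is analytic across $\square(\xi)$; this rests entirely on the residue conditions of \textbf{RHP[6]} and \textbf{RHP[7]} being identical, which holds by construction. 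Everything else is a direct transcription of the small-norm argument already performed in Lemma \ref{lem: m^1 approx m^sol}.
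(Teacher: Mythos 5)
Your proposal is correct and follows essentially the same route as the paper: define $F=m^{(6)}[m^{(7)}]^{-1}$, observe it solves a small-norm RHP on $\Sigma^{(6)}$ with jump $V^{(F)}=m^{(7)}V^{(6)}[m^{(7)}]^{-1}$, and combine Proposition \ref{prop: V^6 to 1} with the uniform bound on $m^{(7)}$ and the singular integral representation to extract $F_1$ and the $t^{-1/2}$ estimate. The paper's proof is terser (it does not spell out the pole cancellation, which you correctly reduce to the identical residue conditions of \textbf{RHP[6]} and \textbf{RHP[7]}), but the argument is the same.
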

\begin{proof}
    We set $F(z):=m^{(6)}(z)[m^{(7)}(z)]^{-1}$ which admits a solution of the following Riemann Hilbert problem:
    \begin{samepage}
    \begin{framed}
        \textbf{RHP[F]}
        \begin{enumerate}[(i)]
          \item $F$ is analytic  in $\Compl\setminus\Sigma^{(1)}$.
          \item $F(z)= 1+ \mathcal{O}\left(\frac{1}{z}\right)$ as $|z|\to\infty$.
          \item The non-tangential boundary values $F_{\pm}(z)$ exist for $z\in\Sigma^{(6)}$ and satisfy the jump relation $F_+=F_-V^{(F)}$, where
              \begin{equation*}
                V^{(F)}(z)=m^{(7)}(z)V^{(6)}(z) \left[m^{(7)}(z)\right]^{-1}.
              \end{equation*}
        \end{enumerate}
    \end{framed}
    \end{samepage}
    Now we proceed as in the proof of Lemma \ref{lem: m^1 approx m^sol}. That is firstly to find $\eta\in L^2(\Sigma^{(6)})$ such that
    \begin{equation*}\label{equ: solution eta}
        \eta(x)=1+ \lim_{\substack{z\to x\\z\in\ominus}}\frac{1}{2\pi\ii}\int_{\Sigma^{(6)}} \frac{\eta(\zeta)(V^{(F)}(\zeta)-1)}{\zeta-z}d\zeta, \qquad z\in\Sigma^{(6)}.
    \end{equation*}
    The next step is to observe that
    \begin{equation*}
        F_1=-\frac{1}{2\pi\ii}\int_{\Sigma^{(6)}} \eta(\zeta)(V^{(F)}(\zeta)-1)d\zeta.
    \end{equation*}
    Proposition \ref{prop: V^6 to 1} ensures the existence of $\eta$ and the required estimate $\|F_1\|\leq c t^{-1/2}$. Note that (\ref{equ: V^6 to 1}) is also true for $V^{(F)}$ instead of $V^{(6)}$ since $\|m^{(7)}(\,\cdot\,;x,t)\|_{L^{\infty}(\Sigma^{(6)})}\leq C$ with $C$ independent of $x$ and $t$.
\end{proof} 

\section{Proof of Theorem \ref{thm: mainthm}}\label{sec: proof}
In order to prove Theorem \ref{thm: mainthm} we firstly assume $u_0\in H^1(\Real)\cap L^{2,s}(\Real)$. Furthermore we assume (\ref{equ: u minus sol}), which gives us $u_0\in\mathcal{G}_N$ (see Theorem \ref{thm: scattering} (i)). Then the scattering data $(r;z'_1,.., z'_N;c'_1,..,c'_N)$ of $u_0$ can be calculated and the solution $u(x,t)$ can be obtained by applying the reconstruction formula (\ref{equ: rec}) to the solution $m$ of \textbf{RHP[NLS]}. Starting from this $m$ we consider our chain of manipulations $m\to m^{(1)}\to\ldots\to m^{(7)}$ and in each step we calculate the associated potential
$$u^{(j)}(x,t):=2\ii\lim_{|z|\to\infty} z[m^{(j)}(z;x,t)]_{12}\qquad j=1,...,7.$$
Applying successively (\ref{equ: def m^1}), Lemma \ref{lem: m^1 approx m^sol}, (\ref{equ: def m^3}), (\ref{equ: def m^4}), Lemma \ref{lem: m^4 approx m^5}, (\ref{equ: def m^6}) and finally Lemma \ref{lem: m^6 approx m^7}, we arrive at
\begin{equation}\label{equ: expansion of u}
    \begin{aligned}
      u(x,t)&=u^{(1)}(x,t)\\
            &=u^{(2)}(x,t)+2\ii[C_1(x,t)]_{12}\\ &=u^{(3)}(x,t)+2\ii[C_1(x,t)]_{12}\\ &=u^{(4)}(x,t)+2\ii[C_1(x,t)+D_1(x,t)]_{12}\\ &=u^{(5)}(x,t)+2\ii[C_1(x,t)+D_1(x,t)+E_1(x,t)]_{12}\\ &=u^{(6)}(x,t)+2\ii[C_1(x,t)+D_1(x,t)+E_1(x,t)]_{12}\\ &=u^{(7)}(x,t) +2\ii[C_1(x,t)+D_1(x,t)+E_1(x,t)+F_1(x,t)]_{12}
    \end{aligned}
\end{equation}
The estimates of Lemmata \ref{lem: m^1 approx m^sol},\ref{lem: RHP P}, \ref{lem: m^4 approx m^5} and \ref{lem: m^6 approx m^7} yield
\begin{equation*}
        \left\| u(\cdot,t)-u^{(7)}(\cdot,t)
            \right\|_{L^{\infty}(\Real)}<C\epsilon |t|^{-\frac12}.
\end{equation*}
Now the remaining question is wether $u^{(7)}$ approximates a $N$-soliton $u^{(sol)}_+$ and we have to specify its parameters. We claim that the poles of the approximating soliton $u^{(sol)}_+$ are the same of $u_0$ and the coupling constants are given by $c_j^{+}=c_j'(\Lambda_j^+)^2$ (where $c'_j$ are the coupling constants of $u_0$). The proof is easy if we use again the above manipulations. Therefore we consider the solution $\widetilde{m}$ of \textbf{RHP[NLS]} with parameters $(0;z'_1,.., z'_N;c^+_1,..,c^+_N)$ such that $u^{(sol)}_+=2\ii \lim z [\widetilde{m}]_{12}$. Starting from $\widetilde{m}$ our manipulations $\widetilde{m}\to...\to\widetilde{m}^{(7)}$ then yield
\begin{equation*}
    u^{(sol)}_+=\widetilde{u}^{(7)}+2\ii[\widetilde{C}_1]_{12}
\end{equation*}
and moreover $\widetilde{u}^{(7)}=u^{(7)}$. Thus (\ref{equ: stability of n solitons}) follows. $|z_j-z'_j|<C\epsilon$ and $|c_j-c'_j|<C\epsilon$ are consequences of the Lipschitz continuity of the scattering transformation. $|c_j-c^+_j|<C\epsilon$ follows if we also use $$|1-\Lambda_j^+|\leq c\left|\int_{-\infty}^{\re(z_{j})} \frac{\log(1+|r(\varsigma))|^2} {\varsigma-z'_{j}}d\varsigma\right|\leq C \|r\|_{L^2(\Real)}^2\leq C\epsilon.$$
Thus the proof of our main result is done for $u_0\in H^1(\Real)$ and $t\to +\infty$. Density arguments like those in \cite{Cuccagna2014} prove the statement for $u_0\in H^1(\Real)\cap L^{2,s}(\Real)$ but $u_0\notin H^1(\Real)$.\\
The case $t\to -\infty$ can be handled as follows. If $u(x,t)$ solves (\ref{equ: nls}) then $\check{u}(x,t):=\overline{u}(x,-t)$ is also a solution to the NLS equation with $\check{u}(x,0)=\overline{u}_0$. Assuming that $(r(z);z'_1,...,z'_N;c'_1,...,c'_N)$ are the scattering data of $u_0$, we know due to the symmetry of (\ref{equ: vx=Pv}) that $\overline{u}_0$ admits scattering data $(\check{r};\check{z}_1,...,\check{z}_N; \check{c}_1,...,\check{c}_N)$ with
\begin{equation*}
    \check{r}(z)=\overline{r}(-z), \quad\check{z}_j=-\overline{z}_j', \quad\check{c}_j=-\overline{c}_j'.
\end{equation*}
By the above calculations we know that $$\|\check{u}(\cdot,t)-\check{u}^{(sol)}_+(\cdot,t) \|_{L^{\infty}(\Real)}< C \epsilon t^{-1/2}\quad\text{as }t\to+\infty,$$
where $\check{u}^{(sol)}_+$ is the soliton associated to the scattering data $(0;\check{z}_1,...,\check{z}_N; \check{c}^+_1,...,\check{c}^+_N)$ with
\begin{equation}\label{equ: check c^+_j}
    \check{c}^+_j=\check{c}_j(\check{\Lambda}_j^+)^2,\quad
    \check{\Lambda}_{j}^{+}:=\exp\left(-\frac{1}{2\pi\ii} \int_{-\infty}^{\re(\check{z}_{j})} \frac{\log(1+|\check{r}(\varsigma))|^2} {\varsigma-\check{z}_{j}}d\varsigma
          \right).
\end{equation}
After inverse transformation we arrive at
$$\|u(\cdot,t)-u^{(sol)}_-(\cdot,t) \|_{L^{\infty}(\Real)}< C \epsilon t^{-1/2}\quad\text{as }t\to-\infty,$$
where $u^{(sol)}_-(x,t)=\overline{\check{u}^{(sol)}_+}(x,-t)$. Making again use of the symmetry of (\ref{equ: vx=Pv}), we know that $u^{(sol)}_-$ admits scattering data $(0;z'_1,...,z'_N;c^-_1,...,c^-_N)$ where
\begin{equation*}
    c^-_j = -\overline{\check{c}^+_j} \\
   = c_j'(\overline{\check{\Lambda}_{j}^{+}})^2
   = c_j'\exp\left(\frac{1}{\pi\ii} \int_{\re(z_{j})}^{\infty} \frac{\log(1+|r(\varsigma))|^2} {\varsigma-z_{j}'}d\varsigma
          \right).
\end{equation*}
The latter equality can be obtained easily from (\ref{equ: check c^+_j}) and shows us that (\ref{equ: def Lambda}) is true. Thus the proof of Theorem \ref{thm: mainthm} is completed.
\begin{rem}\label{rem: distinct ground states}
    The two ground states $u^{(sol)}_{\pm}$ are in general distinct which follows immediately from the distinct expressions for $\Lambda_{j}^+$ and $\Lambda_{j}^-$, respectively (see (\ref{equ: def Lambda})).
\end{rem} 

\bibliographystyle{alpha}
\bibliography{lit}

\end{document}